\providecommand{\abs}[1]{\lvert #1 \rvert}
\providecommand{\norm}[1]{\lVert #1 \rVert}
\numberwithin{equation}{section}
\numberwithin{equation}{section}
\newtheorem{thm}{Theorem}[section]
\newtheorem{lem}[thm]{Lemma}
\newtheorem{rem}[thm]{Remark}
\newtheorem{prop}[thm]{Proposition}
\newtheorem{exam}[thm]{Example}
\newcommand{\E}{\mathbb{E}}
\renewcommand{\P}{\mathbb{P}}
\providecommand{\eps}{\varepsilon}
\renewcommand{\theta}{\vartheta}
\renewcommand{\phi}{\varphi}
\newcommand{\MLE}{\operatorname{MLE}}
\newcommand{\KL}{\operatorname{KL}}
\newcommand{\B}{\mathbb{B}}
\newcommand{\R}{\mathbb{R}}
\newcommand{\N}{\mathbb{N}}
\newcommand{\mC}{\mathcal{C}}
\newcommand{\mF}{\mathcal{F}}
\newcommand{\mX}{\mathcal{X}}
\newcommand{\bbH}{\mathbb{H}}
\begin{document}

\title{Posterior contraction rates\\
 for support boundary recovery\footnote{We thank Marc Hoffmann, Richard Nickl and the members of the Bayes Club in Amsterdam for helpful comments. We are grateful to two anonymous referees and the associate editor for numerous detailed suggestions that significantly improved the article.
Financial support by the DFG through research unit FOR 1735 {\it Structural Inference in Statistics: Adaptation and Efficiency} is gratefully acknowledged. In addition, the second author was partially supported by an NWO TOP grant.
}}

\author{\parbox[t]{6cm}{\centering Markus Rei\ss\\Institute of Mathematics\\ Humboldt-Universit\"at zu Berlin\\ mreiss@math.hu-berlin.de}  \hspace{1cm} \parbox[t]{6cm}{\centering Johannes Schmidt-Hieber\\ Department of Applied Mathematics \\ University of Twente \\ a.j.schmidt-hieber@utwente.nl} }

\date{}
\maketitle

\begin{abstract}
Given a sample of a Poisson point process with intensity $\lambda_f(x,y) = n \mathbf{1}(f(x) \leq y),$ we study recovery of the boundary function $f$ from a nonparametric Bayes perspective. Because of the irregularity of this model, the analysis is non-standard. We establish a general result for the posterior contraction rate with respect to the $L^1$-norm  based on entropy and one-sided small probability bounds.  From this, specific posterior contraction results are derived for  Gaussian process priors and priors based on  random wavelet series.
\medskip

\noindent\textit{MSC 2000 subject classification}:  62C10; 62G05; 60G55

\noindent\textit{Key words: Frequentist Bayesian analysis, posterior contraction, Poisson point process, boundary detection, one-sided entropy, Gaussian prior, wavelet prior.}

\end{abstract}

%
%

\section{Introduction}

We consider a  support boundary detection model, where a Poisson point process (PPP) $N$ on $[0,1]\times \mathbb{R}$ is observed with intensity
\begin{align*}
	\lambda (x,y) = \lambda_f (x,y) = n \mathbf{1}(f(x)\leq y).
\end{align*}
The statistical task is to recover the unobserved lower boundary $f:[0,1] \rightarrow \mathbb{R}$ of the support of $\lambda$, see the  simulated data set in Figure \ref{Fig1}. This boundary detection model can be seen as a continuous analogue of the nonparametric regression model with discrete equidistant design and exponential errors, that is, we observe $Y_{i,n}= f(i/n) + \eps_{i,n},$ $i=1,\ldots,n,$ and $(\eps_{i,n})_i$ are i.i.d. exponential random variables, cf. \cite{meisterreiss2013, jirak2014}. As with the Gaussian white noise model for regular regression, we expect that our posterior contraction rates will transfer to this or even more general boundary regression models. The main structural point is that due to the one-sided error distribution, these models are not Hellinger differentiable and therefore irregular.

For classification problems, one often faces unbalanced designs and almost no uncertainty about the label classification. The most extreme case of correctly labeled training data and unbalanced design is if we only observe data from one class. In this case, we can still do binary classification if we additionally make assumptions on the distribution of the design. The support boundary model is an instance of such a scenario under the assumptions that the design in the observed class has been generated from a PPP and that the decision boundary is a function $x\mapsto f(x).$

\begin{figure}\label{Fig1}
\begin{center}
	\includegraphics[scale=0.65]{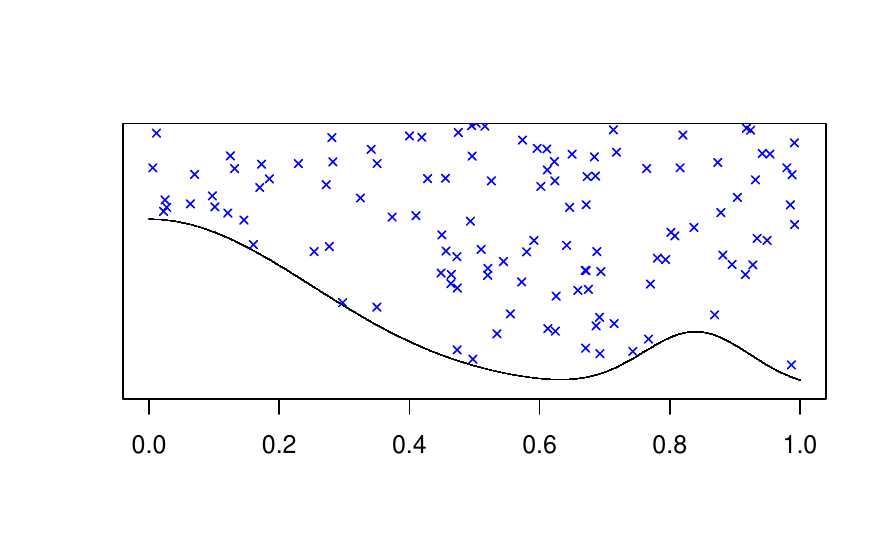}
\end{center}
\vspace{-1.6cm}
\caption{Simulated dataset (blue) and true boundary function (black).}
\end{figure}

Most of the nonparametric models that have been analysed from a frequentist Bayes point of view are asymptotically equivalent to a Gaussian shift experiment. Yet Poisson experiments form another important class of limit experiments \cite{LeCam1990}, whose statistical structure is  very different. The laws are not mutually absolutely continuous leading to a peculiar version of the Bayes formula and one-sided entropy conditions, subsequently. Moreover, the Hellinger distance is governed by the $L^1$-distance between the boundary functions in contrast to the $L^2$-theory in Gaussian shift models.

The goal of this article is to study posterior contraction for the support boundary detection model. We consider the $L^1$-distance as loss function, which is linked to the information geometry of the  model. Posterior contraction for the Hellinger loss is well-studied and can be reduced to conditions on the entropy of the parameter space and the small ball probability of the prior, cf. \cite{ggv, ghosal2007}. We derive a modification of this result which is applicable for the support boundary detection model. Related to that, we show the following surprising result: If the posterior is restricted to functions that lie below the true function, then posterior contraction follows already from the behaviour of the one-sided small ball prior probability. In this case no bound on the entropy is necessary. On the contrary, for functions which lie above the true function, we essentially only require the entropy bound.

Given the general contraction result, we apply this to concrete classes of priors. In a first step, we study Gaussian priors and derive an analogue of the result in \cite{vvvz} for the support boundary detection model. We then study posterior contraction for random wavelet series priors with independent but not necessarily Gaussian random coefficients. For these priors we derive a result on small ball probabilities, which is of independent interest. The corresponding contraction rates only match with the minimax estimation rates for one smoothness index. Below this critical smoothness the contraction rates can be improved if more heavy-tailed distributions on the wavelet coefficients are used. We also prove that truncated random wavelet series priors achieve the adaptive rates up to logarithmic factors. The companion paper \cite{reiss2018b} studies compound Poisson process priors for support boundary recovery. The focus of that article is on Bernstein-von Mises type theorems for function classes with increasing parameter dimension and frequentist coverage of credible sets.

Bayesian methods for irregular or boundary detection problems have attracted considerable attention especially because the MLE approach is often inefficient. \cite{chernozhukov2004} compares Bayes estimators with the MLE in a parametric model that is irregular. In \cite{bochkina2014} a Bernstein-von Mises theorem is derived for parameters which are on the boundary of the parameter space. The limit distribution consists in this case of Gaussian and exponentially distributed components. \cite{2012arXiv1210.6204K} considers posterior contraction around $\theta$ given i.i.d. observations from a class of nonparametric densities of the form $\eta(x-\theta)$ with $\eta(y)=0$ for $y<0$ and $\eta(y)>0$ for $y\geq 0.$ This can be viewed as a semiparametric, irregular model, where the nuisance parameter is the unknown distribution of the noise. For nonparametric models, \cite{Lo1982} considers Bayesian methods for Poisson point processes, but does not cover boundary detection. In \cite{LiGhoshal2015} a nonparametric Bayes approach is studied for detecting the boundary of an object in an image, assuming different distributions of the response variable inside and outside the object. This boundary detection model is regular and the likelihood ratios are always well-defined. The underlying information geometry is induced by the $L^1$-norm, similar to our PPP model, but there is no different treatment necessary for the posterior on functions below or above the true function.

The paper is structured as follows. In Section \ref{sec.general_results}, we derive a general result relating posterior contraction to entropy and small ball estimates. This result is then used in Section \ref{sec.GP_priors} to derive a criterion for posterior contraction under Gaussian priors. Section \ref{sec.Wavelet} studies wavelet expansion priors. Technicalities and proofs are deferred to an appendix.

{\it Notation.} We write $(x)_+ = \max(x,0)$ and denote the indicator function of a set $A$ by $\mathbf{1}_A = \mathbf{1}(\cdot \in A).$ For $p \in [1, \infty],$ $\|\cdot\|_p$ denotes the $L^p[0,1]$-norm. Inequalities for $L^1$-functions are assumed to hold almost everywhere. Let $\lfloor \beta\rfloor$ denote the largest integer strictly smaller than $\beta>0.$ The $\beta$-H\"older norm is $\|f \|_{\mC^\beta}:=\sum_{j=1}^{\lfloor \beta\rfloor}\|f^{(j)}\|_\infty + \sup_{x\neq y} |f^{(\lfloor \beta \rfloor)}(x) - f^{(\lfloor \beta \rfloor)}(y)|/|x-y|^{\beta -\lfloor \beta \rfloor}.$ We denote by $\mC^\beta(R)$ the class of functions $f$ on $[0,1]$ with $\|f\|_{\mC^\beta} \leq R.$ We further write $N = \sum_i \delta_{(X_i,Y_i)}$ for a random point measure on $[0,1] \times \mathbb{R}$ and often identify $N$ with its support points $(X_i, Y_i)_i.$ For two positive sequences $(a_n)_n, (b_n)_n$ we write $a_n \lesssim b_n$ if there is a constant $C$ such that $a_n \leq Cb_n$ for all $n.$ If $a_n \lesssim b_n$ and $b_n \lesssim a_n$ then we write $a_n \asymp b_n.$

\section{General results on posterior contraction rates}
\label{sec.general_results}

\subsection{Likelihood and Bayes formula}

Before stating the main result on posterior contraction, we  study the likelihood in the support boundary detection model. From that we  derive expressions for the information distances and a specific form of the Bayes formula.

Denote by $P_f=P_f^n$ the distribution of a PPP with intensity measure $\Lambda_f(B)=\int_B\lambda_f$ for Borel sets $B$ in $[0,1]\times\R$ with Lebesgue density $\lambda_f (x,y)= n \mathbf{1}(f(x) \leq y)$, where $f$ is some function in $L^1([0,1])$. The likelihood ratio $dP_f/dP_g$ is only defined for $g \leq f,$ otherwise $P_g$ does not dominate $P_f.$ The fact that the observation laws are not necessarily mutually absolutely continuous is a distinctive feature of support estimation problems and will play a major role in the analysis. Recall that for the Poisson point process $N$ its support points in $[0,1]\times \mathbb{R}$ are denoted by $(X_i, Y_i)_{i\ge 1}$.

\begin{lem}
\label{lem.com}
For $g \leq f$  and $f,g\in L^1([0,1])$, the likelihood ratio has the explicit form
\begin{align}
	\frac{dP_f}{dP_g} = \exp\Big(n \int_0^1 (f -g) (x) \, dx\Big) \cdot \mathbf{1}(\forall i : f(X_i)\leq Y_i).
\label{eq.RadonNikodym}
\end{align}
\end{lem}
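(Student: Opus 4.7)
The plan is to decompose the observation $N$ into two independent Poisson parts on which the likelihood computation is either trivial or reduces to a finite-intensity PPP comparison. Set
\[
A = \{(x,y):\, g(x)\le y < f(x)\},\qquad B = \{(x,y):\, f(x)\le y\}.
\]
These sets are disjoint, and together they cover the support $\{g\le y\}$ of $\lambda_g$. Since $g\le f$, the intensity $\lambda_f$ vanishes on $A$, whereas $\Lambda_g(A)=n\int_0^1 (f-g)(x)\,dx$ is finite because $f-g\in L^1$. On $B$ we have $\lambda_f\equiv\lambda_g\equiv n$.

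Next, I would invoke the standard superposition/restriction property for Poisson point processes: under $P_g$, the restrictions $N|_A$ and $N|_B$ are independent PPPs with intensities $\lambda_g\mathbf{1}_A$ and $\lambda_g\mathbf{1}_B$, respectively; analogously under $P_f$. Since $\lambda_f|_B = \lambda_g|_B$, the marginal laws of $N|_B$ under $P_f$ and under $P_g$ coincide, so this factor contributes $1$ to the Radon--Nikodym derivative. The only non-trivial contribution comes from the $A$-component, where under $P_f$ we have $P_f^A=\delta_\emptyset$ (no points a.s.) and $P_g^A$ is a finite-intensity Poisson law. Absolute continuity $P_f \ll P_g$ follows because $P_f$ is concentrated on $\{N|_A=\emptyset\}\times(\cdot)$, which has positive $P_g$-probability, and the two laws agree on the $B$-factor.

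For the $A$-component, the probability of the empty configuration under $P_g^A$ is $\exp(-\Lambda_g(A)) = \exp(-n\int_0^1 (f-g))$, whence
\[
\frac{dP_f^A}{dP_g^A}(\omega) \;=\; \frac{\mathbf{1}(\omega=\emptyset)}{\exp(-n\int_0^1 (f-g))} \;=\; \exp\!\Big(n\int_0^1 (f-g)\Big)\,\mathbf{1}(\omega=\emptyset).
\]
Multiplying the two factors and rewriting $\{N|_A=\emptyset\}$ as $\{\forall i:(X_i,Y_i)\notin A\}$ yields the claimed formula, once we notice that $P_g$-almost surely $Y_i\ge g(X_i)$ for all $i$, so $(X_i,Y_i)\notin A$ is equivalent to $f(X_i)\le Y_i$.

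The main obstacle is that the total intensities $\Lambda_f([0,1]\times\R)$ and $\Lambda_g([0,1]\times\R)$ are both infinite, so one cannot quote the textbook Janossy-density formula for the likelihood ratio directly. The decomposition above circumvents this: independence of $N|_A$ and $N|_B$ allows the infinite-intensity part $B$ (where the two laws agree) to be factored out, reducing the computation to a finite-intensity comparison on $A$. If one prefers to avoid appealing to independence of disjoint restrictions for $\sigma$-finite intensities, the same result can be obtained by exhausting $B$ through bounded windows $B\cap([0,1]\times(-\infty,M])$, applying the textbook formula there, and passing to the limit $M\to\infty$; the exponential factors from the two windows cancel exactly because $\lambda_f=\lambda_g$ on $B$.
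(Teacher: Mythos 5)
Your proof is correct and follows essentially the same route as the paper: decompose the PPP into the finite-intensity strip between $g$ and $f$ and the infinite-intensity region above $f$ where the two laws coincide, and compute the likelihood on the strip. The only cosmetic difference is that you evaluate the Radon--Nikodym derivative on the strip directly from $P_f^A=\delta_\emptyset$ and $P_g^A(\{\emptyset\})=e^{-\Lambda_g(A)}$, whereas the paper plugs into the general change-of-measure formula for finite-intensity PPPs, which reduces to the same expression.
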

The information geometry of the model is driven by the $L^1([0,1])$-norm. Indeed, the Hellinger affinity is $\rho(P_f, P_g) = \int \sqrt{dP_f dP_g} = \exp(-\tfrac n2 \|	f-g\|_1).$ This implies for the squared Hellinger distance
\begin{align*}
	H^2(P_f, P_g)
	&= 2- 2 \rho(P_f, P_g) = 2 -2 \exp\big(-\tfrac n2 \|	f-g\|_1\big)
	\leq n \|f-g\|_1, \ \  \forall f, g\in L^1([0,1]).
\end{align*}
Similarly, the Kullback-Leibler divergence satisfies $KL(P_f, P_g)=n \|f-g\|_1$ if $g\leq f$ and $KL(P_f, P_g)=\infty$ otherwise.

Since the likelihood requires the support boundaries to be in $L^1([0,1])$, we consider as priors  distributions $\Pi$ of stochastic processes $(X_t)_{t\in [0,1]}$  on a Polish space $(\Theta,d)$ equipped with its Borel $\sigma$-algebra, which embeds continuously into $L^1([0,1])$.  We aim for a Bayes formula of the form
\begin{align}
	\Pi(B | N) = \frac{\int_B  \frac{dP_f}{dP_{f_0}} (N) \, d \Pi(f)}{\int_\Theta  \frac{dP_f}{dP_{f_0}}(N) \, d \Pi(f)}.
	\label{eq.Bayes_target}
\end{align}
Since in the boundary detection model the likelihood ratio does  not exist in general, the formula has to be modified. The next result provides  a Bayes formula under the frequentist assumption that the data are generated under $P_{f_0}.$

\begin{lem}
\label{lem.Bayes_formula}
For $f_0\in L^1([0,1])$, a prior $\Pi$ on the Polish space $\Theta$  with $\Pi(\{f\in\Theta:f\ge f_0\})>0$ and a Borel set $B\subset\Theta$ we have an explicit Bayes formula under the law $P_{f_0}$:
\begin{align*}
	\Pi(B | N) = \frac{\int_B e^{n\int f} \mathbf{1} (\forall i : f(X_i) \leq Y_i) \, d\Pi(f)}{\int_\Theta  e^{n\int f} \mathbf{1}(\forall i : f(X_i) \leq Y_i) \, d\Pi(f)}
	= \frac{\int_B  e^{-n\int (f_0-f)_+} \frac{dP_{f\vee f_0}}{dP_{f_0}} (N) \, d\Pi(f)}{\int_\Theta  e^{-n\int (f_0-f)_+} \frac{dP_{f\vee f_0}}{dP_{f_0}} (N) \, d\Pi(f)}\quad P_{f_0}\text{-a.s.}
\end{align*}
\end{lem}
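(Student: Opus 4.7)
The plan is to first establish the algebraic identity linking the two right-hand sides, and then to prove the posterior formula itself by a truncation argument.

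For the first equivalence, I apply Lemma~\ref{lem.com} to the pair $f_0\le f\vee f_0$, which gives
\begin{equation*}
  \frac{dP_{f\vee f_0}}{dP_{f_0}}(N) = e^{n\int (f-f_0)_+}\,\mathbf{1}\bigl(\forall i:\ (f\vee f_0)(X_i)\le Y_i\bigr).
\end{equation*}
Under $P_{f_0}$ every support point satisfies $Y_i\ge f_0(X_i)$ almost surely, so $(f\vee f_0)(X_i)\le Y_i$ is equivalent to $f(X_i)\le Y_i$ $P_{f_0}$-a.s. Combined with the decomposition $f-f_0 = (f-f_0)_+ - (f_0-f)_+$ this yields $P_{f_0}$-a.s.
\begin{equation*}
  e^{-n\int(f_0-f)_+}\frac{dP_{f\vee f_0}}{dP_{f_0}}(N) = e^{n\int f \,-\, n\int f_0}\,\mathbf{1}\bigl(\forall i:\ f(X_i)\le Y_i\bigr).
\end{equation*}
Multiplying numerator and denominator of the right-hand expression by $e^{n\int f_0}$ reconciles the two forms.

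For the Bayes formula itself I would argue by truncation. Fix a large $K$ and restrict $N$ to the strip $[0,1]\times[-K,K]$; every $P_f^{(K)}$ with $|f|\le K$ is then absolutely continuous with respect to a common reference PPP $P_{0,K}$ of constant intensity $n$ on the strip, with explicit Radon--Nikodym density $e^{n\int f + nK}\,\mathbf{1}(\forall i:\ f(X_i)\le Y_i)$, as follows from the standard PPP likelihood ratio calculation underlying Lemma~\ref{lem.com}. Because the family $\{P_f^{(K)}\}_f$ is dominated by $P_{0,K}$, a straightforward Fubini argument applied to the joint law $\Pi\otimes P_f^{(K)}$ gives the usual Bayes formula for the truncated model. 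The constant $e^{nK}$ cancels between numerator and denominator, producing precisely the expression in the lemma but with the indicator restricted to support points inside the strip; letting $K\to\infty$ and invoking monotone convergence recovers all support points of $N$ and yields the stated formula.

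The main obstacle is exactly that the family $\{P_f\}_f$ admits no single dominating measure: by Lemma~\ref{lem.com}, $P_f\ll P_g$ holds only when $g\le f$, so any proof has to sidestep the usual dominated-model setup. This is what forces the indicator $\mathbf{1}(\forall i:\ f(X_i)\le Y_i)$ to appear explicitly — it encodes the one-sided absolute continuity of the PPP model — and it is what makes the truncation step useful, since enlarging the sample space to the strip $[0,1]\times[-K,K]$ reintroduces a genuine dominating PPP. The alternative representation in terms of $dP_{f\vee f_0}/dP_{f_0}$, shown equivalent in the first paragraph, avoids the same difficulty by using $P_{f\vee f_0}\ll P_{f_0}$, and will be the more convenient form later in the paper when combining the Bayes formula with entropy and small-ball prior bounds.
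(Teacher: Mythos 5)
Your first paragraph correctly establishes the algebraic identity between the two displayed expressions; this is essentially the identity the paper records at the end of its own proof, and the argument you give is sound.

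The truncation argument in the second paragraph is a genuinely different route from the paper's, and it has a gap. The paper's proof introduces a single dominating PPP $P^*$ whose intensity $\lambda^*$ is strictly positive on all of $[0,1]\times\R$, equals $n$ above the graph of $f_0$, and has finite total mass below it. Because every $P_f$ is then dominated by $P^*$ at once (for arbitrary $f\in L^1$, bounded or not), the usual Fubini/Bayes argument applies directly; the resulting density $dP_f/dP^*$ carries an extra factor $\prod_{i:Y_i<(f\vee f_0)(X_i)} n/\lambda^*(X_i,Y_i)$ which collapses to $1$ under $P_{f_0}$ since all observed points satisfy $Y_i\ge f_0(X_i)$, and the remaining constant cancels between numerator and denominator. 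No limiting procedure is involved.

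Your route, by contrast, truncates the sample space to a strip $[0,1]\times[-K,K]$ and must then pass $K\to\infty$, and this limiting step is where the gap lies. First, the likelihood $e^{n\int f+nK}\mathbf{1}(\forall i:f(X_i)\le Y_i)$ you quote for the truncated model is exact only when $|f|\le K$ pointwise; for a general $f\in L^1$ the $P_{0,K}$-density carries $\exp\bigl(n\int ((f\vee(-K))\wedge K)\bigr)$ instead of $e^{n\int f}$, and this clipped exponent is not monotone in $K$. So ``monotone convergence'' as you invoke it does not apply; you would either need to restrict the prior to $\{\|f\|_\infty\le K\}$ and control the mass on its complement, or carry the clipping through a dominated-convergence argument. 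Second, and more importantly, the finite-$K$ Bayes formula identifies $\Pi(B\mid N^{(K)})$, the posterior given only the truncated observation. Passing from this to $\Pi(B\mid N)$ requires a L\'evy upward martingale convergence argument — namely that $\sigma(N^{(K)})\uparrow\sigma(N)$ and hence $\Pi(B\mid N^{(K)})\to\Pi(B\mid N)$ $P_{f_0}$-a.s. — together with a verification that the limiting denominator $\int_\Theta e^{n\int f}\mathbf{1}(\forall i:f(X_i)\le Y_i)\,d\Pi(f)$ is strictly positive $P_{f_0}$-a.s. None of this is addressed; ``letting $K\to\infty$'' is stated as though it were automatic. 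The idea is salvageable, but the extra limiting machinery is nontrivial, and the paper's choice of a single non-truncated dominating measure is materially simpler precisely because it avoids the limit altogether.
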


The right-hand side is well-defined since $dP_{f\vee f_0}/dP_{f_0}$ exists and $\Pi(\{f\in\Theta:f\ge f_0\})>0$ implies that $P_{f_0}$-almost surely the denominator does not vanish. Compared to \eqref{eq.Bayes_target}, the likelihood ratios are reweighted in the Bayes formula by a factor $e^{-n\int (f_0-f)_+}.$ In particular, for $f\leq f_0$ the integrands are equal to the deterministic values $e^{-n \int (f_0-f)}.$


\subsection{Main results}

We start by stating the main theorem, which reduces posterior contraction to conditions on the entropy and small ball probabilities. The result is an analogue of the general contraction theorems in \cite{ggv, ghosal2007}. Denote by $N(\eps, \mF, d)$ the $\eps$-covering number of a metric space $\mF$ with respect to the metric $d.$

\begin{thm}
\label{thm.main_ub}
If for some $\Theta_n \subset \Theta,$ some rate $\eps_n \rightarrow 0$ and constants $C, C',C''\geq 1$, $A>0$

\begin{tabular}{ll}
 (i) & \quad
   $N(\eps_n, \Theta_n, \|\cdot\|_\infty) \leq C'' e^{C'n\eps_n};$ \\[0.3cm]
  (ii) & \quad $\Pi(f: \|f-f_0\|_1 \leq A\eps_n, f\leq f_0) \geq e^{-Cn\eps_n};$ \\[0.3cm]
  (iii) & \quad $\Pi(\Theta_n^c)\leq C''e^{-(C+A+1) n \eps_n},$ \\
\end{tabular}

then there exists a positive constant $M$ such that
\begin{align*}
	&E_{f_0}\big[\Pi\big( f : \| f-f_0\|_1 \geq M\eps_n | N \big) \big]  \leq 3C'' e^{- n\eps_n}.
\end{align*}
Condition (i) can be relaxed to any of the conditions of Proposition \ref{prop.suffcond}.
\end{thm}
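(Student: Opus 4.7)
The plan is to follow the Ghosal--Ghosh--van der Vaart roadmap (prior-mass lower bound on the denominator, covering bound on the numerator) while exploiting the degenerate likelihood ratios through the explicit Bayes formula of Lemma \ref{lem.Bayes_formula}. I start from the first form
\[
\Pi(B\mid N)=\frac{\int_B e^{n\int f}\mathbf{1}(\forall i: f(X_i)\leq Y_i)\,d\Pi(f)}{\int_\Theta e^{n\int f}\mathbf{1}(\forall i: f(X_i)\leq Y_i)\,d\Pi(f)}.
\]
Restricting the denominator to $\{f\leq f_0:\|f-f_0\|_1\leq A\eps_n\}$, the indicator is $P_{f_0}$-a.s.\ equal to $1$ and $e^{n\int f}\geq e^{n\int f_0-nA\eps_n}$, so hypothesis (ii) produces the \emph{deterministic} lower bound $e^{n\int f_0-(A+C)n\eps_n}$. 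For the two easy parts of the numerator: on $\{f\leq f_0,\ \|f-f_0\|_1\geq M\eps_n\}$ the integrand equals the deterministic $e^{n\int f_0-n\|f-f_0\|_1}\leq e^{n\int f_0-nM\eps_n}$, and for $\Theta_n^c$ Fubini applied to the second Bayes form bounds the expected numerator by $\int_{\Theta_n^c}e^{-n\int(f_0-f)_+}\,d\Pi\leq\Pi(\Theta_n^c)$, controlled by (iii). Dividing both by the denominator bound and taking $M>A+C+1$ gives contributions of order $e^{-n\eps_n}$. Neither of these invocations uses entropy --- this is the ``one-sided'' phenomenon announced in the introduction.

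The crux is bounding the posterior mass on $\Theta_n\cap\{\|f-f_0\|_1\geq M\eps_n,\ f\not\leq f_0\}$. Take an $\eps_n$-cover $(f_j)_{j\leq K}$ of $\Theta_n$ in $\|\cdot\|_\infty$ with $K\leq C''e^{C'n\eps_n}$, set $g_j=f_j-\eps_n$, $\overline f_j=f_j+\eps_n$ and $B_j=\{\|f-f_j\|_\infty\leq\eps_n\}$. The monotone sandwich $g_j\leq f\leq\overline f_j$ on $B_j$ gives the key pointwise estimate
\[
e^{n\int f}\mathbf{1}(\forall i: f(X_i)\leq Y_i)\leq e^{n\int\overline f_j}\mathbf{1}(\forall i: g_j(X_i)\leq Y_i),
\]
and the random indicator on the right has exact $P_{f_0}$-probability $e^{-n\int(g_j-f_0)_+}$. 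I split the indices of the cover according to whether $\int(g_j-f_0)_+\geq c\eps_n$ or $<c\eps_n$, for $c>C'$ fixed. A union bound over the $K$ ``high'' cells produces a single bad event of $P_{f_0}$-probability $\leq C''e^{(C'-c)n\eps_n}$, off which every high-index indicator vanishes, killing the contribution. For the ``low'' cells, any $B_j$ that meets $\{\|f-f_0\|_1\geq M\eps_n\}$ necessarily satisfies $\|g_j-f_0\|_1\geq(M-2)\eps_n$ and hence $\int(f_0-g_j)_+\geq(M-c-2)\eps_n$; taking the expectation of the monotone bound gives a per-cell contribution of order $e^{n\int f_0+2n\eps_n-n(M-c-2)\eps_n}\Pi(B_j)$ (using $\int \overline f_j-\int(g_j-f_0)_+=\int f_0-\int(f_0-g_j)_++2\eps_n$), and summation yields $\lesssim C''e^{n\int f_0-n(M-c-C'-4)\eps_n}$.

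Assembling the three pieces, dividing by the denominator bound and choosing $M$ sufficiently large collapses everything to the claimed $3C''e^{-n\eps_n}$. The principal obstacle, and the real departure from the classical GGV template, is precisely this last step: because $P_f$ and $P_{f_0}$ need not be mutually absolutely continuous, the usual Hellinger-test construction is unavailable, and because the $L^1$-information geometry decouples into two inequivalent one-sided pieces $\int(f-f_0)_+$ and $\int(f_0-f)_+$, these pieces must be treated by genuinely different mechanisms (union bound for the ``above'' cells, expectation for the ``below'' cells). The monotone sandwich combined with this dichotomy is what replaces the standard testing argument and, with straightforward modification of the cover, will also accommodate the relaxed entropy conditions promised by Corollary \ref{cor.suffcond}.
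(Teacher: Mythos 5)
Your argument is correct and uses the same three core mechanisms as the paper: the prior-mass lower bound on the Bayes denominator via condition (ii), a one-sided test/union-bound event to kill the ``above'' part of the numerator, and a direct expectation of $H(f)=e^{-n\int(f_0-f)_+}\,dP_{f\vee f_0}/dP_{f_0}(N)$ (equivalently of the deterministic $e^{n\int f}$ when $f\le f_0$) for the ``below'' part and for $\Theta_n^c$. The paper, however, packages the argument differently: it factors out Propositions \ref{prop.post_contr1} and \ref{prop.post_contr2} (plus Corollary \ref{cor.suffcond}) as standalone one-sided contraction results, and the proof of Theorem \ref{thm.main_ub} then reduces to the one-line $\Theta_n^c$ estimate. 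Concretely, the paper decomposes the bad set by the two one-sided losses $\{\int(f_0-f)_+\ \mathrm{large}\}$ and $\{f\in\Theta_n:\int(f-f_0)_+\ \mathrm{large}\}$, whereas you decompose by $\{f\le f_0\}$, $\Theta_n^c$, and the cells of a cover of $\Theta_n$, sorting cells into ``high'' and ``low'' according to $\int(g_j-f_0)_+$. Both decompositions cover the same ground, but yours applies the expectation bound cell by cell for the low cells, which costs an extra factor $K\le C''e^{C'n\eps_n}$ and forces a larger $M$; Proposition \ref{prop.post_contr1} avoids this by applying the expectation bound at the level of individual functions, over all of $\Theta$, so no cover enters the ``below'' direction at all. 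Your bad event $\mathcal{E}=\bigcup_{\mathrm{high}\ j}\{\forall i:g_j(X_i)\le Y_i\}$ is exactly the paper's test $\phi_n$ from Proposition \ref{prop.post_contr2}, restricted to the high cells. Two cosmetic points worth noting: your final bound comes out as $(1+3C'')e^{-n\eps_n}$ because of the extra fourth piece, rather than the paper's $3C''e^{-n\eps_n}$, which is immaterial; and the paper's modular packaging has a payoff beyond tidiness, since Propositions \ref{prop.post_contr1} and \ref{prop.post_contr2} make explicit the phenomenon you flag at the end --- that one side needs only prior mass and the other only entropy --- and are reused directly for the relaxed conditions of Corollary \ref{cor.suffcond}.
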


In condition \textit{(ii)} we need a lower bound on the one-sided small ball probabilities. Applying triangle inequality and $\|\cdot\|_1\le\|\cdot\|_\infty$, a stronger version of \textit{(ii)}, which  is often easier to verify, is given by
\begin{align}
	\textit{(ii)':} \ \ \Pi(f: \|f+A\eps_n/2-f_0\|_\infty \leq A\eps_n/2) \geq e^{-Cn\eps_n}.
	\label{eq.condii_sup_norm}
\end{align}
The proof of the theorem is deferred to the appendix, yet main intermediate results are presented here. It will be convenient to establish posterior contraction for $\int (f_0-f)_+$ and $\int (f-f_0)_+$ separately. Surprisingly, for posterior contraction with respect to $\int (f_0-f)_+$ we only need the small ball estimate of the prior probability, but no bound on the entropy. In contrast,  posterior contraction for $\int (f-f_0)_+$ only requires that \textit{(i)} and \textit{(iii)} of Theorem \ref{thm.main_ub} hold.

\begin{prop}
\label{prop.post_contr1}
If for some constants $C, A>0,$
\begin{align*}
\Pi(f: \norm{f_0-f}_1 \leq A\eps_n, f\leq f_0) \geq e^{-Cn\eps_n},
\end{align*}
then
\begin{align*}
	&E_{f_0}\Big[\Pi\Big( f :  \int(f_0-f)_+ \geq (1+A+C)\eps_n \Big | N \Big) \Big]  \leq e^{- n\eps_n}.
\end{align*}
\end{prop}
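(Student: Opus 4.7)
The plan is to combine the specialised Bayes formula from Lemma~\ref{lem.Bayes_formula} with the fact that the integrand in the denominator becomes \emph{deterministic} once we restrict to $\{f\leq f_0\}$. Let $B_n:=\{f:\int(f_0-f)_+\geq(1+A+C)\eps_n\}$ and write the posterior using the second expression in Lemma~\ref{lem.Bayes_formula}, that is $\Pi(B_n|N)=\mathcal{N}/\mathcal{D}$ where
\begin{align*}
\mathcal{N} = \int_{B_n} e^{-n\int(f_0-f)_+}\tfrac{dP_{f\vee f_0}}{dP_{f_0}}(N)\,d\Pi(f), \qquad
\mathcal{D} = \int_\Theta e^{-n\int(f_0-f)_+}\tfrac{dP_{f\vee f_0}}{dP_{f_0}}(N)\,d\Pi(f).
\end{align*}

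First I would bound the denominator from below by a \emph{deterministic} constant. Restrict the integral defining $\mathcal{D}$ to the event $S:=\{f\leq f_0,\ \int(f_0-f)\leq A\eps_n\}$. On $S$ we have $f\vee f_0=f_0$, so $dP_{f\vee f_0}/dP_{f_0}\equiv 1$, and also $\int(f_0-f)_+=\int(f_0-f)\leq A\eps_n$. Therefore
\begin{align*}
\mathcal{D}\ \geq\ e^{-nA\eps_n}\,\Pi(S)\ \geq\ e^{-n(A+C)\eps_n}
\end{align*}
by the prior small ball hypothesis. Crucially, this lower bound holds \emph{pointwise}, without any concentration argument, which is precisely why no entropy condition enters.

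Next I would bound $E_{f_0}[\mathcal{N}]$ from above. On $B_n$ the prefactor satisfies $e^{-n\int(f_0-f)_+}\leq e^{-n(1+A+C)\eps_n}$, and since $f_0\leq f\vee f_0$ the likelihood ratio $dP_{f\vee f_0}/dP_{f_0}$ is a genuine density with $E_{f_0}[dP_{f\vee f_0}/dP_{f_0}(N)]=1$. Hence Fubini gives
\begin{align*}
E_{f_0}[\mathcal{N}]\ \leq\ e^{-n(1+A+C)\eps_n}\int_{B_n}E_{f_0}\!\left[\tfrac{dP_{f\vee f_0}}{dP_{f_0}}(N)\right]d\Pi(f)\ \leq\ e^{-n(1+A+C)\eps_n}.
\end{align*}
Combining the two estimates,
\begin{align*}
E_{f_0}\bigl[\Pi(B_n|N)\bigr]\ \leq\ e^{n(A+C)\eps_n}\,E_{f_0}[\mathcal{N}]\ \leq\ e^{-n\eps_n},
\end{align*}
which is the claim.

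The argument is short, and there is no real technical obstacle; the one conceptual point is recognising that restricting the denominator to $\{f\leq f_0\}$ removes all randomness from the integrand, converting the lower bound on $\mathcal{D}$ into a purely deterministic statement about the prior mass of a one-sided neighbourhood. This is why the proposition needs only a one-sided small ball estimate and no entropy control, in sharp contrast to the companion bound for $\int(f-f_0)_+$, where the likelihood ratio genuinely fluctuates and entropy enters.
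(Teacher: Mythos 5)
Your proof is correct and follows essentially the same route as the paper: lower-bound the denominator by restricting to the one-sided neighbourhood $\{f\leq f_0,\ \int(f_0-f)\leq A\eps_n\}$ where the integrand is deterministic, then use Fubini and $E_{f_0}[dP_{f\vee f_0}/dP_{f_0}]=1$ to bound the expected numerator. The only cosmetic difference is that you fold the prior-mass bound directly into the denominator estimate, whereas the paper keeps $e^{An\eps_n}$ and $\Pi(S)$ separate in \eqref{eq.lb_denominator}, but the argument is the same.
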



The one-sided small ball probability can be viewed as a prior mass condition on a Kullback-Leibler ball in view of $\{f: \KL(P_{f_0},P_f) \leq A\eps_n n \} =\{f: \int(f_0-f) \leq A\eps_n, f\leq f_0\}.$ To establish posterior contraction with respect to the loss $\int (f- f_0)_+,$ we need to understand the testing theory in the boundary detection model, which is non-standard due to the lack of absolute continuity in general. The Neyman-Pearson test $\phi = \mathbf{1}(dP_g/dP_{f\wedge g} \geq  dP_f/dP_{f\wedge g})$ behaves well for testing $f$ against $g$:
\begin{align*}
	E_f[\phi] + E_g[1-\phi]
	& = \int \Big(\frac{dP_f}{dP_{f\wedge g}} \wedge \frac{dP_g}{dP_{f\wedge g}} \Big) \, dP_{f\wedge g}
	\leq \rho(P_f,P_g) =e^{-\frac{n}{2}\|f-g\|_1}.
\end{align*}
Robustness with respect to the $L^1$-distance (i.e., Hellinger-distance), however, in the sense that for some $\alpha, \beta >0,$ and all $n$
\begin{align*}
	E_f[\phi] + \sup_{h : \|h-g\|_1 \leq \alpha \|f-g\|_1}E_h[1-\phi] \leq e^{- \beta n \| f-g\|_1}
\end{align*}
holds, is violated:
if $f \le g,$ we have $\phi={\bf 1}(\forall i: g(X_i)\le Y_i)$ and thus
$E_h[1-\phi]  = 1-e^{-n \int (g-h)_+}$,
which for general $h$ is much larger than $e^{- \beta n \| f-g\|_1}$. Under the additional assumption $h\ge g$, however, the type II error vanishes completely and we find for $f\le g$
\begin{align*}
	E_f[\phi] \leq e^{-\frac{n}{2}\|f-g\|_1} \quad \text{and} \quad \sup_{h\geq g} E_h[1-\phi] =0.
\end{align*}
To control the posterior, it is therefore natural to use one-sided bracketing entropy. Consider a subset $\mF$ of $L^1([0,1])$.
The one-sided bracketing number $N_{[}(\delta, \mF)$ is the smallest number $M$ of functions $\ell_1,\ldots,\ell_M\in L^1([0,1])$ such that for any $f\in \mF$ there exists $j\in \{1,\ldots, M\}$ with $\ell_j\leq f$ and $\int(f-\ell_j)\leq \delta.$
For some function $f_0$ and integer $n$ consider the separation quantity
\[S_{[}( n,\mF, f_0)=\inf_{(\ell_j)_{j\in J}}\sum_{j\in J} e^{-n \int(\ell_j-f_0)_+}\in[0,\infty],\]
where the infimum is taken over (not necessarily finite) subsets $J$ of the integers and  functions $(\ell_j)_{j\in J}\subset L^1([0,1])$ such that for any $f\in \mF$ there exists $j\in J$ with $\ell_j\leq f$. In both definitions the functions $\ell_j$ are not required to be in $\mF.$

In view of the next result, the quantity $S_[$, which can be seen as a weighted covering number, is the natural complexity measure for $\Theta$.

\begin{prop}
\label{prop.post_contr2}
If  $\Pi(f:  f \leq f_0) >0,$ then for any Borel set $B \subseteq \Theta$
\begin{align*}
	E_{f_0}\big[\Pi\big( f\in B  \big | N \big) \big]  \leq  S_{[} \big( n,B,f_0\big).
\end{align*}
\end{prop}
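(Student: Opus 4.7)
The plan is to apply the Bayes formula of Lemma~\ref{lem.Bayes_formula} and to exploit the monotonicity of $f \mapsto \mathbf{1}(\forall i : f(X_i) \leq Y_i)$. The assumption $\Pi(f : f \leq f_0) > 0$ ensures that the Bayes denominator $Z_N := \int_\Theta e^{n\int f}\mathbf{1}(\forall i: f(X_i)\leq Y_i)\, d\Pi(f)$ is strictly positive $P_{f_0}$-a.s., because the indicator equals $1$ whenever $f \leq f_0$, so the posterior is well defined and the ratio in the Bayes formula is an honest random variable bounded by $1$.

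Given any admissible family of lower envelopes $(\ell_j)_{j\in J}$ as in the definition of $S_{[}(n,B,f_0)$, I would set $B_j := \{f \in B : \ell_j \leq f\}$, note that $B = \bigcup_j B_j$ by the bracket property, and pass to a disjoint decomposition $\tilde B_j := B_j \setminus \bigcup_{k<j} B_k$. The central step is to establish the pointwise bound
\[
\Pi(\tilde B_j \mid N) \leq \psi_j, \qquad \psi_j := \mathbf{1}\big(\forall i : \ell_j(X_i) \leq Y_i\big).
\]
This is immediate from the Bayes formula: for $f \in \tilde B_j$ we have $\ell_j \leq f$, so if $\psi_j = 0$ there is some $i$ with $\ell_j(X_i) > Y_i$, forcing $f(X_i) > Y_i$ and hence $\mathbf{1}(\forall i: f(X_i) \leq Y_i) = 0$; the numerator of $\Pi(\tilde B_j\mid N)$ thus vanishes on $\{\psi_j = 0\}$, while on $\{\psi_j = 1\}$ the estimate is trivial.

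Summing over the disjoint pieces and taking expectation then yields
\[
E_{f_0}\big[\Pi(B \mid N)\big] \;=\; \sum_{j\in J} E_{f_0}\big[\Pi(\tilde B_j \mid N)\big] \;\leq\; \sum_{j\in J} E_{f_0}[\psi_j].
\]
A direct Poisson computation gives $E_{f_0}[\psi_j] = e^{-n \int(\ell_j - f_0)_+}$, since $\psi_j = 1$ iff $N$ has no points in the region $\{(x,y): f_0(x) \leq y < \ell_j(x)\}$, whose $\Lambda_{f_0}$-mass is precisely $n \int(\ell_j - f_0)_+$. Taking the infimum over all admissible families $(\ell_j)_{j\in J}$ gives the claimed bound by $S_{[}(n,B,f_0)$.

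The main conceptual step is the pointwise domination $\Pi(\tilde B_j \mid N) \leq \psi_j$; it is what allows the argument to completely sidestep any quantitative control of the random denominator $Z_N$ and explains why only qualitative positivity of $\Pi(f\leq f_0)$ is required. Beyond this observation, the remaining ingredients are routine Poisson bookkeeping, and I do not anticipate any substantive obstacle.
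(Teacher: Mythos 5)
Your proof is correct and uses essentially the same idea as the paper: the key observation is that if $\ell_j \leq f$ and some data point violates $\ell_j(X_i) \leq Y_i$, then the likelihood of $f$ vanishes, which bounds $\Pi(B\mid N)$ pointwise by $\sum_j \psi_j$ (the paper works with the test $\phi_n = \mathbf{1}(\exists j\,\forall i:\ell_j(X_i)\le Y_i) = \max_j\psi_j$ instead, arriving at the same union bound after taking $E_{f_0}$). A minor simplification in your version is the explicit remark that only qualitative positivity of the denominator is needed, whereas the paper's proof invokes $\sigma$-continuity to produce a quantitative lower bound $\delta^{-1}e^{nR+n\int f_0}$ on $1/\int H\,d\Pi$ which is then multiplied by an identically vanishing integral.
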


Notice that the right-hand side does not depend on the prior. Weighted covering numbers might be small even for non-compact parameter spaces and have been used before in nonparametric Bayes theory, cf. \cite{hoffmann2015}, Section 4. For many specific problems, covering or bracketing numbers are sufficient and we can further upper bound the right-hand side in Proposition \ref{prop.post_contr2}:

\begin{prop}\label{prop.suffcond}
Work under the assumption of Proposition \ref{prop.post_contr2}. If $C\geq 1,$ then
\[  E_{f_0}\Big[\Pi\Big( f\in \Theta_n : \int(f-f_0)_+ \geq 4C \eps_n  \Big | N \Big) \Big]\le C'' e^{-n\eps_n}\]
holds for $C''\ge 1$ under any of the following conditions:

\begin{tabular}{ll}
 (i) &
   $S_{[}( n,\{f\in\Theta_n:\int(f-f_0)_+\ge 4C \eps_n\},f_0)\le C''e^{-n\eps_n}$; \\[0.3cm]
  (ii) & $N_{[}\big(2C\eps_n, \Theta_n\big) \leq C'' e^{Cn \eps_n}$; \\[0.3cm]
  (iii) & $N(C\eps_n, \Theta_n, \|\cdot\|_\infty) \leq C'' e^{C n\eps_n}.$ \\
\end{tabular}

\end{prop}

With these propositions at hand we can easily derive Theorem \ref{thm.main_ub} in the appendix.

We can avoid the entropy condition if we control instead the risk of an estimator. Indeed, for a loss function $\ell$ the inequality
\[\inf_\phi \Big(E_{\theta_0}[\phi]+ \sup_{\theta \in \Theta : \ell(\theta, \theta_0) \geq 2\eps} E_\theta [1-\phi]\Big) \leq 2\inf_{\widehat \theta} \sup_{\theta \in \Theta}  P_\theta (\ell(\widehat \theta, \theta) \geq \eps)
\]
 follows by studying the test $\phi = \mathbf{1} (\ell(\widehat \theta, \theta) \geq \eps)$ given an estimator $\widehat \theta.$ If the nonparametric MLE for $f$ exists, we have a particularly simple relation in the support boundary detection model between posterior contraction of $\int(f-f_0)_+$ and the excess probability of the MLE. The following lemma holds even without any conditions on the prior.

\begin{lem}\label{LemBayesMLE}
Assume that $\Theta_n \subseteq \Theta$ contains $f_0$  and is closed under maxima, that is, if $f,g \in \Theta_n$, then $f\vee g \in \Theta_n.$ If the maximum likelihood estimator $\widehat f^{\MLE}$, based on the parameter space $\Theta_n$, exists, then
\begin{align}
	&E_{f_0}\Big[\Pi\Big( f\in \Theta_n : \int(f-f_0)_+ > \eps_n\Big | N \Big) \Big]
	\leq 	P_{f_0} \Big( \int(\widehat f^{\MLE}- f_0)_+> \eps_n\Big).
	\label{eq.MLE_risk_ineq}
\end{align}
\end{lem}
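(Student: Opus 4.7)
The plan is to exploit the explicit Bayes formula in Lemma \ref{lem.Bayes_formula}: the indicator $\mathbf{1}(\forall i : f(X_i) \leq Y_i)$ appearing in its numerator shows that $\Pi(\cdot|N)$ is supported on the random ``feasible'' set $\mF_N := \{f \in L^1([0,1]) : f(X_i) \leq Y_i \text{ for all } i\}$. Under $P_{f_0}$ the PPP construction gives $f_0 \in \mF_N$ almost surely, so $\Theta_n \cap \mF_N$ is non-empty, the MLE is well defined, and $\int f_0 \leq \int \widehat f^{\MLE}$.

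The core step is to show that every $f \in \Theta_n \cap \mF_N$ satisfies $f \leq \widehat f^{\MLE}$ almost everywhere. By closure of $\Theta_n$ under maxima, $f \vee \widehat f^{\MLE} \in \Theta_n$, and this pointwise maximum still respects the data constraint because $\max(f(X_i),\widehat f^{\MLE}(X_i))\leq Y_i$ whenever both summands are; hence $f \vee \widehat f^{\MLE} \in \Theta_n \cap \mF_N$. In this model the MLE is the maximiser of $\int f$ over $\Theta_n \cap \mF_N$ (the indicator in the likelihood enforces the constraint, and the $e^{n\int f}$ factor is monotone in $\int f$), so maximality forces $\int(f \vee \widehat f^{\MLE}) = \int \widehat f^{\MLE}$, i.e.\ $f \leq \widehat f^{\MLE}$ a.e. In particular $f_0 \leq \widehat f^{\MLE}$.

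This yields the pointwise bound $(f - f_0)_+ \leq (\widehat f^{\MLE} - f_0)_+$ for every $f$ in the support of the posterior restricted to $\Theta_n$, and therefore $\int(f-f_0)_+ \leq \int(\widehat f^{\MLE} - f_0)_+$. Consequently, whenever $\int(\widehat f^{\MLE}-f_0)_+\leq \eps_n$ the event $\{f\in\Theta_n:\int(f-f_0)_+>\eps_n\}$ has zero posterior mass, while on the complementary event we bound the posterior trivially by $1$. This gives the sample-wise estimate $\Pi(f\in\Theta_n:\int(f-f_0)_+>\eps_n\mid N)\leq \mathbf{1}(\int(\widehat f^{\MLE}-f_0)_+>\eps_n)$, and taking $E_{f_0}$ yields \eqref{eq.MLE_risk_ineq}. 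The only nontrivial ingredient is the pointwise domination $f\leq \widehat f^{\MLE}$; this is where both the closure of $\Theta_n$ under maxima and the specific integrand-maximising characterisation of the MLE in the PPP model are used, and everything else is bookkeeping.
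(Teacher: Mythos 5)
Your proof is correct and takes essentially the same route as the paper: establish that any $f\in\Theta_n$ with positive posterior weight (i.e., respecting the data constraint $f(X_i)\le Y_i$ for all $i$) must satisfy $f\le\widehat f^{\MLE}$ a.e., since otherwise $f\vee\widehat f^{\MLE}\in\Theta_n$ would respect the constraint and have a strictly larger value of $\int f$, contradicting maximality; then conclude via $\int(f-f_0)_+\le\int(\widehat f^{\MLE}-f_0)_+$. You spell out a couple of points the paper leaves implicit (that $\widehat f^{\MLE}$ itself satisfies the data constraint, and that $f_0\in\mF_N$ a.s.\ under $P_{f_0}$), but the core argument is identical.
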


As in the proof of Proposition \ref{prop.post_contr2} the upper bound is independent of the prior. It is well-known that posterior contraction with rate $\eps_n$ implies existence of a frequentist estimator with rate of convergence $\eps_n,$ cf. Theorem 2.5 in \cite{ggv}. Inequality \eqref{eq.MLE_risk_ineq} shows that also the other direction may hold, namely that convergence of an estimator implies posterior contraction with the same rate. Regarding the assumptions, a sufficient condition for the existence of the MLE is that $\Theta$ is closed under arbitrary maxima: $f_i\in\Theta, i\in I \Rightarrow\bigvee_{i\in I} f_i\in\Theta$, see the discussion in \cite{reiss2014}. Examples of function spaces which are closed under the maximum are H\"older balls, monotone functions and convex functions.

\section{Gaussian process priors}
\label{sec.GP_priors}

A common choice for nonparametric Bayes methods is to pick the distribution of a Gaussian process as prior probability measure. Given a Gaussian process prior $\Pi,$ the seminal work in \cite{vvvz} relates posterior contraction to the small ball prior probability and approximation properties in the reproducing kernel Hilbert space (RKHS) generated by $\Pi.$ The following result adapts Theorem 2.1 in \cite{vvvz} to our setting.

\begin{thm}
\label{thm.ub_for_GPs}
Consider as prior $\Pi$ the distribution of a Gaussian process $X$ with sample paths in the space $(\mC[0,1], \|\cdot\|_\infty).$ Write $\|\cdot\|_{\bbH}$ for the RKHS-norm induced by the covariance operator of $X.$ If $\eps_n\geq n^{-1}$ and for all $n$
\begin{align}
	\inf_{h: \|h+ 2\eps_n-f_0\|_\infty \leq \eps_n} \|h\|_{\bbH}^2 - \log \P (\|X\|_\infty \leq \eps_n)\leq n \eps_n,
	\label{eq.RKHS_contraint}
\end{align}
then there exists a constant $M$ such that for all $n$
\begin{align*}
	&E_{f_0}\big[\Pi\big( f : \| f-f_0\|_1 \geq M\eps_n | N \big) \big]  \leq 3e^{-n\eps_n}.
\end{align*}
\end{thm}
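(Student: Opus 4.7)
The plan is to invoke Theorem~\ref{thm.main_ub} with condition~$(ii)$ strengthened to the sup-norm version~\eqref{eq.condii_sup_norm} and condition~$(i)$ relaxed via Corollary~\ref{cor.suffcond}(iii). I would fix $A=4$ so that \eqref{eq.condii_sup_norm} reads $\Pi(\|f + 2\eps_n - f_0\|_\infty \leq 2\eps_n) \geq e^{-Cn\eps_n}$. A convenient feature of this reformulation is that every $f$ in the sup-norm ball of radius $2\eps_n$ around $f_0 - 2\eps_n$ automatically satisfies $f \leq f_0$, so the one-sided support constraint needed for $P_{f_0} \ll P_f$ is built in for free.

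To verify~\eqref{eq.condii_sup_norm} I would apply the classical Cameron--Martin decentering inequality: for any $h \in \bbH$,
\begin{align*}
\P\bigl(\|X - h\|_\infty \leq \eps_n\bigr) \geq e^{-\|h\|_\bbH^2/2}\,\P\bigl(\|X\|_\infty \leq \eps_n\bigr).
\end{align*}
If $h$ additionally satisfies $\|h + 2\eps_n - f_0\|_\infty \leq \eps_n$, the triangle inequality gives the inclusion $\{\|X - h\|_\infty \leq \eps_n\} \subseteq \{\|X + 2\eps_n - f_0\|_\infty \leq 2\eps_n\}$; taking $-\log$ and infimizing over admissible $h$ then yields
\begin{align*}
-\log \Pi\bigl(\|f + 2\eps_n - f_0\|_\infty \leq 2\eps_n\bigr) \leq \tfrac12 \inf_h \|h\|_\bbH^2 - \log \P(\|X\|_\infty \leq \eps_n) \leq n\eps_n,
\end{align*}
where the final step uses \eqref{eq.RKHS_contraint}.

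For the entropy and remaining-mass conditions I would adopt the standard van der Vaart--van Zanten sieve $\Theta_n = M_n \bbH_1 + \eps_n B_\infty$, with $\bbH_1 = \{h\in\bbH:\|h\|_\bbH\leq 1\}$ the RKHS unit ball and $B_\infty$ the sup-norm unit ball of $\mC[0,1]$. Borell's inequality yields $\Pi(\Theta_n^c) \leq 1 - \Phi(M_n + \alpha_n)$, where $\alpha_n = \Phi^{-1}(\P(\|X\|_\infty \leq \eps_n))$. Since \eqref{eq.RKHS_contraint} together with $\|h\|_\bbH^2 \geq 0$ implies $-\log \P(\|X\|_\infty \leq \eps_n) \leq n\eps_n$, one obtains $|\alpha_n| \lesssim \sqrt{n\eps_n}$, so the choice $M_n = c\sqrt{n\eps_n}$ with $c$ large enough gives $\Pi(\Theta_n^c) \leq e^{-(C+A+1)n\eps_n}$. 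The sup-norm entropy bound $\log N(2\eps_n, \Theta_n, \|\cdot\|_\infty) \leq \log N(\eps_n, M_n \bbH_1, \|\cdot\|_\infty) \lesssim n\eps_n$ then follows from the Kuelbs--Li-type estimate used in \cite{vvvz}, since $M_n^2 \asymp n\eps_n$.

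The main obstacle is the mismatch between the symmetric Gaussian fluctuations of the prior and the one-sided nature of the Poisson support boundary model. The shift by $2\eps_n$ in \eqref{eq.RKHS_contraint} is precisely the mechanism that converts a Cameron--Martin centered ball into one lying entirely below $f_0$; this costs only a constant factor in the radius, so the VVZ template carries over essentially verbatim, with the Poisson KL rate $n\eps_n$ playing the role of the Gaussian-regression rate $n\eps_n^2$ throughout.
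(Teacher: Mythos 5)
Your proposal is correct and follows essentially the same route as the paper. The paper packages the Gaussian-prior machinery you re-derive in-line (Cameron--Martin decentering, Borell's inequality for the sieve $\Theta_n=M_n\bbH_1+\eps_n B_\infty$, and the Kuelbs--Li-type entropy bound) into its Theorem~\ref{thm.vvvz}, which is Theorem 2.1 of \cite{vvvz} with $n\eps_n^2$ replaced by a general $\gamma_n$, then invokes that theorem with $\gamma_n=n\eps_n$ and $C_*=6$ and feeds the resulting sieve into Theorem~\ref{thm.main_ub} via \eqref{eq.condii_sup_norm} and Corollary~\ref{cor.suffcond}(iii) with $A=4$, $C=C''=1$, $C'=36$, exactly as you do.
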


If the infimum in the theorem is taken over the empty set, the left hand side in \eqref{eq.RKHS_contraint} is defined as $+\infty.$ Condition \eqref{eq.RKHS_contraint} is slightly different compared to (1.2) and (1.3) in \cite{vvvz}. As a bound we have $n\eps_n$ instead of $n\eps_n^2$ and in the RKHS part there is an extra term $2\eps_n$ which accounts for the one-sided prior mass condition in Theorem \ref{thm.main_ub}.

As the left-hand side of \eqref{eq.RKHS_contraint} has been studied for many classes of Gaussian processes, it is easy to obtain the corresponding contraction rates as a consequence of Theorem \ref{thm.ub_for_GPs}. For the main examples in \cite{vvvz} condition \eqref{eq.RKHS_contraint} becomes $\eps_n^{-1/\alpha}\lesssim n\eps_n$ and we obtain the optimal posterior contraction rate $n^{-\alpha/(\alpha+1)}$ for $f_0\in \mC^\alpha(R)$. We give three concrete examples:

\begin{exam}\mbox{}
\begin{enumerate}
\item Brownian motion. As prior we consider the law of the process $(X_0+W_t)_{t\in [0,1]}$ with a Brownian motion $W$ and an independent standard normal random variable $X_0$. Let $f_0\in\mC^\beta(R)$. Arguing as in \cite{vvvz}, Section 4.1, we find for the corresponding RKHS norm $\|h\|_{\bbH}^2 = \|h'\|_2^2 + h(0)^2$ and $\inf_{h: \|h+2\eps_n-f_0\|_\infty \leq \eps_n} \|h\|_{\bbH}^2 \lesssim \eps_n^{2-2/\beta}$ as well as for the small ball probabilities $P(\|X\|_\infty \leq \eps_n ) \geq  \P( |X_0| \leq \eps_n/2) \P(\|W\|_\infty \leq \eps_n/2) \gtrsim \eps_n e^{-C/\eps_n^2}$. The closure of $\bbH$ is $\mC[0,1]$ and \eqref{eq.RKHS_contraint} becomes
\begin{align}
	\eps_n^{2-2/\beta}+C\eps_n^{-2}+\log(\eps_n^{-1})\lesssim n\eps_n.
	\label{eq.BM_rate_eq}
\end{align}
Minimizing in $\eps_n$ yields the $L^1$-contraction rate
\begin{align}
	\begin{cases}
	n^{-\frac {\beta}{2-\beta}}, & \text{for} \ \beta \leq 1/2,\\
	n^{-\frac 13}, & \text{for} \ \beta \geq 1/2.
	\end{cases}
	\label{eq.BM_rates}
\end{align}
This coincides with the minimax rate $n^{-\beta/(\beta+1)}$ if $\beta = 1/2.$ For $\beta >1/2,$ we do not gain anymore in the contraction rate by imposing more smoothness on the signal. For $\beta< 1/2$ the rate is slower than the minimax rate.

It is instructive to compare the contraction rates to the ones obtained for regular models. The equivalent of \eqref{eq.BM_rate_eq} in regular models is $\eps_n^{2-2/\beta}+C\eps_n^{-2}+\log(\eps_n^{-1})\lesssim n\eps_n^2,$ see again \cite{vvvz}, Section 4.1. In this case, we obtain under a Brownian motion prior the contraction rates  $n^{-1/4}$ for $\beta\geq 1/2$ and $n^{-\beta/2}$  for $\beta<1/2,$ which are always slower than in \eqref{eq.BM_rates}.

\item Riemann-Liouville process. Similar to Brownian motion,   the Riemann-Liouville process $R_t^\alpha$  with parameter $\alpha >0$ starts at zero in zero and also the derivatives (if they exist) vanish at zero. The Riemann-Liouville process with random derivatives at zero is given by
\begin{align*}
	X_t = \sum_{k=0}^{\lceil \alpha\rceil} Z_k t^k + R_t^\alpha,
\end{align*}
for $\lceil \alpha\rceil$ the smallest integer strictly larger than $\alpha$ and independent $Z_1,\ldots, Z_{\lceil \alpha\rceil}\sim \mathcal{N}(0,1)$ which are also independent of $(R_t^\alpha)_t.$ From Theorem 4.3 in \cite{vvvz},  we find that \eqref{eq.RKHS_contraint} becomes $\eps_n^{-1/\alpha}\lesssim n\eps_n$ leading to the posterior contraction rate $n^{-\alpha/(\alpha+1)}.$

\item Fractional Brownian motion. For the Hurst index $\alpha \in (0,1)$  Theorem 4.4 in \cite{vvvz} yields that condition \eqref{eq.RKHS_contraint} for a fractional Brownian motion prior becomes $\eps_n^{-1/\alpha}\lesssim n\eps_n$ resulting again in the optimal posterior contraction rate $n^{-\alpha/(\alpha+1)}.$
\end{enumerate}
\end{exam}

\section{Wavelet expansion priors}\label{sec.Wavelet}

Series expansions provide another natural way to construct priors on function spaces. We study process priors $(X_t)_{t\in [0,1]}$ which admit an expansion in a wavelet basis $(\psi_{jk})$:
\begin{align}
	X_t = \sum_{j,k} d_{j,k}\xi_{j,k}\psi_{j,k}(t) \ \ \ \text{in} \ L^2[0,1].
	\label{eq.prior_wavelet}
\end{align}
Here, $d_{j,k}$ are real numbers and $\xi_{j,k}$ are i.i.d. random variables with Lebesgue density $f_\xi.$ As a prior on the function $f$ this means that each wavelet coefficient of $f$ is drawn independently from the distribution of $d_{j,k} \xi_{j,k}.$ For convenience, we restrict ourselves in this section to $s$-regular, boundary corrected and compactly supported wavelet bases $(\psi_{jk})$ in $L^2([0,1])$ as constructed in Section 4 of \cite{Cohen1993}.

Wavelet expansion priors have been studied in different nonparametric models with  uniform random variables $\xi_{j,k}$, cf. \cite{Gine2011, Ray2013}.  Moreover, \cite{wang1997} derives bounds on the small ball probabilities of Gaussian processes of the form \eqref{eq.prior_wavelet}.
Below, we derive posterior contraction rates for a class of distributions $\xi_{j,k}.$  To start with, we prove the following general lower bound on small ball probabilities, which is of independent interest.

\begin{lem}
\label{lem.small_ball_prob_around_h}
Assume \eqref{eq.prior_wavelet} with a symmetric and unimodal density $f_\xi$ and $|d_{j,k}|\asymp 2^{-\frac j2 (2\alpha+1)}$ for some $\alpha>0.$ Suppose further that there exists a constant $\delta>0$ such that
\begin{align*}
	\E\big[|\xi_{j,k}|^{(1+\delta)/\alpha}\big] < \infty.
\end{align*}
Then for all $\beta\in(0,s]$, $R>0$ there exists a constant $D>0$ such that
\begin{align*}
	\inf_{h \in \mC^\beta(R)}\P\big(\|X-h\|_\infty \leq \eps \big)
	\geq f_\xi\big(D\eps^{-(\alpha -\beta)_+/\beta}\big)^{D\eps^{-1/(\alpha \wedge \beta)}} \quad \text{for all} \ \ 0< \eps \leq 1.
\end{align*}
\end{lem}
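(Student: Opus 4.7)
The plan is to expand in the wavelet basis and split by resolution level. Write $h=\sum_{j,k}h_{jk}\psi_{jk}$ with $|h_{jk}|\lesssim 2^{-j(2\beta+1)/2}$ (using $h\in\mC^\beta(R)$ and $\beta\le s$), so that $X-h=\sum_{j,k}(d_{jk}\xi_{jk}-h_{jk})\psi_{jk}$. I would pick a cutoff level $J$ with $2^J\asymp \eps^{-1/(\alpha\wedge\beta)}$ and use the basic wavelet bound $\|\sum_{j,k}c_{jk}\psi_{jk}\|_\infty\lesssim \sum_j 2^{j/2}\max_k|c_{jk}|$ (from $\|\psi_{jk}\|_\infty\lesssim 2^{j/2}$ and localisation) to decompose $\|X-h\|_\infty$ into a low-frequency part ($j\le J$) and a tail ($j>J$).

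The tail is deterministic for $h$: Hölder regularity gives $\|\sum_{j>J}h_{jk}\psi_{jk}\|_\infty\lesssim 2^{-J\beta}\lesssim\eps$ by the choice of $J$. For $X$, I would combine the moment bound $\E[|\xi_{jk}|^{(1+\delta)/\alpha}]\le L$ with Markov's inequality and a union bound over $k$ at each level to show that, with probability at least $\tfrac12$, one has $\max_k|\xi_{jk}|\le t_j$ with $t_j$ growing slowly enough so that the geometric decay $|d_{jk}|\asymp 2^{-j(2\alpha+1)/2}$ yields $\sum_{j>J}2^{j/2}\max_k|d_{jk}\xi_{jk}|\lesssim\eps$. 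The exponent $(1+\delta)/\alpha$ is what forces these geometric series to converge at the correct rate.

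For the low-frequency part, symmetry and unimodality of $f_\xi$ give, for each $(j,k)$ with $j\le J$,
\[
\P\bigl(|d_{jk}\xi_{jk}-h_{jk}|\le \eta_j\bigr)\ge \frac{2\eta_j}{|d_{jk}|}\,f_\xi\!\left(\frac{|h_{jk}|+\eta_j}{|d_{jk}|}\right),
\]
because the density of $\xi_{jk}$ is largest at $0$ and decreasing in $|\cdot|$, so the minimum of $f_\xi$ on the relevant interval is attained at the far endpoint. The crucial ratio is $|h_{jk}|/|d_{jk}|\asymp 2^{j(\alpha-\beta)}$, uniformly $O(1)$ when $\beta\ge\alpha$ and bounded by $\eps^{-(\alpha-\beta)/\beta}$ at $j=J$ when $\beta<\alpha$; either way the argument of $f_\xi$ is $O(\eps^{-(\alpha-\beta)_+/\beta})$ uniformly in $j\le J$.

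Finally I would choose windows $\eta_j$ (for example $\eta_j=c\eps\,2^{-j/2}/(J-j+1)^2$) so that $\sum_{j\le J}2^{j/2}\eta_j\le\eps/2$, and multiply the one-dimensional probabilities using independence. Because the total number of coefficients up to level $J$ is $\asymp 2^J\asymp \eps^{-1/(\alpha\wedge\beta)}$, the product of the $f_\xi$-factors gives the stated exponent. The main obstacle I anticipate is precisely the bookkeeping of the polynomial prefactors $\prod_{j,k}(2\eta_j/|d_{jk}|)$: they have to be absorbed into the constant $D$ (or into $f_\xi$ using monotonicity and the trivial normalisation $\int f_\xi=1$) without perturbing the exponent $-1/(\alpha\wedge\beta)$. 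Matching the two regimes $\beta<\alpha$ and $\beta\ge\alpha$ in a single clean bound, so that $(\alpha-\beta)_+$ appears naturally, is the most delicate estimate.
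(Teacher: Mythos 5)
Your overall strategy---wavelet decomposition, cutoff $2^J\asymp\eps^{-1/(\alpha\wedge\beta)}$, and the anticoncentration bound $\P(|d_{jk}\xi_{jk}-h_{jk}|\le\eta_j)\ge 2\eta_j|d_{jk}|^{-1}f_\xi(\cdot)$ with the observation $|h_{jk}|/|d_{jk}|\asymp 2^{j(\alpha-\beta)}\lesssim\eps^{-(\alpha-\beta)_+/\beta}$ for the low-frequency part---is indeed the paper's strategy, and the choice of windows $\eta_j$ summing correctly is fine. The gap is in the tail of $X$.

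You assert that by Markov plus a union bound over $k$, with probability at least $\tfrac12$ one has $\max_k|\xi_{jk}|\le t_j$ for all $j>J$, with $t_j$ growing slowly. This cannot be correct. Each level $j$ has $\asymp 2^j$ coefficients, so the union-bound estimate is $\P(\exists\,(j,k),\,j>J:|\xi_{jk}|>t_j)\lesssim\sum_{j>J}2^j L\,t_j^{-(1+\delta)/\alpha}$. Making this sum $\le\tfrac12$ forces $t_j\gtrsim 2^{j\alpha/(1+\delta)}$ (up to constants), and then the deterministic contribution $\sum_{j>J}2^{-j\alpha}t_j\asymp 2^{-J\alpha\delta/(1+\delta)}$ is not $\lesssim\eps\asymp 2^{-J(\alpha\wedge\beta)}$ unless $\alpha\delta/(1+\delta)\ge\alpha\wedge\beta$, which fails for $\beta\ge\alpha$ and for small $\delta$ when $\beta<\alpha$. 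In short: the number of tail coefficients that matter is $\gtrsim 2^J\asymp\eps^{-1/(\alpha\wedge\beta)}\to\infty$, and no union bound controls them with probability bounded away from zero.

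The paper instead accepts an exponentially small tail probability. Using independence it writes $\P(\forall j>J,k:|\xi_{jk}|\le t_j)$ as a product with $t_j\asymp 2^{(j-J)\alpha/(1+\delta/2)}$, applies Markov to each factor together with $1-y\ge e^{-2y}$ for $y\le\tfrac12$, and obtains a lower bound $\exp(-R''2^J)\asymp\exp(-R''\eps^{-1/(\alpha\wedge\beta)})$. This factor, and the polynomial prefactors $\prod_{j\le J}(2\eta_j/|d_{jk}|)^{\#k}\gtrsim K^{-2^J}$ from the low-frequency product, are then absorbed into the $f_\xi$-power by noting $f_\xi(x)\le\tfrac12$ for $|x|\ge1$ (symmetry, unimodality, $\int f_\xi=1$), so the whole expression is of the form $f_\xi(D\eps^{-(\alpha-\beta)_+/\beta})^{D\eps^{-1/(\alpha\wedge\beta)}}$. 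To fix your argument, replace the ``probability $\ge\tfrac12$'' claim by this product-over-independent-coordinates estimate and carry the resulting $\exp(-c\,2^J)$ factor through to the final bound instead of treating it as a harmless constant.
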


For $\beta \geq \alpha$ the lower bound has the form $C^{- \eps^{-1/\alpha}}$ with $C=f_\xi(D)^{-D}$. For $\beta < \alpha$ the lower bound depends on the tails of the distribution: heavier tails lead to larger lower bounds on the small ball probabilities. The fastest contraction rate that can be obtained using the small ball estimate in Lemma \ref{lem.small_ball_prob_around_h} and Theorem \ref{thm.main_ub} is $\eps_n= n^{-\beta/(1+\beta)}$, which is the solution of the equation $\eps_n^{-1/\beta} = n\eps_n.$


\begin{thm}
\label{thm.contr_rates_for_wav_priors}
Consider the process in \eqref{eq.prior_wavelet} as prior with a symmetric and unimodal density $f_\xi$ and $|d_{j,k}|\asymp 2^{-\frac j2 (2\alpha+1)}$ for some  $\alpha>0.$ Suppose  $f_\xi(x) \leq \gamma^{-1}e^{-\gamma |x|^q}$ for some $q>\alpha^{-1}$, some (sufficiently small) $\gamma > 0$ and all $x\in \R$. Fix $\beta\in(0,s]$, $R>0$. For any sequence $\eps_n \rightarrow 0$, satisfying
\begin{equation}\label{eq.decay_cond}
- \log f_\xi\Big(D\eps_n^{-\frac{(\alpha -\beta)_+}{\beta}}\Big)\lesssim	n \eps_n^{\frac {1+(\alpha \wedge \beta)}{\alpha \wedge \beta}},
\end{equation}
there exist positive constants $M$ and $c$ such that for all $n$
\begin{align*}
	&\sup_{f_0 \in \mC^\beta (R)}E_{f_0}\big[\Pi\big( f : \| f-f_0\|_1 \geq M\eps_n | N \big) \big]  \leq e^{-c n\eps_n}.
\end{align*}
In the case $q\in ((2\alpha)^{-1},\alpha^{-1}]$ the result remains true   under the additional assumption $\eps_n\gtrsim n^{-(2\alpha-1/q)/(2\alpha-1/q+1)+\delta}$ for some $\delta>0$.
\end{thm}


The proof is based on verifying the conditions of Theorem \ref{thm.main_ub}. Since on high resolution levels more prior mass is assigned to large wavelet coefficients, heavy tails can also lead to a larger bias. In the proof, this is reflected in the choice of the set $\Theta_n$ which is taken to be a Besov $B^\alpha_{p,\infty}$-ball, where the $p$ depends on $q$ and $\alpha.$ The control of uniform entropy of this Besov space induces then the assumption $q>(2\alpha)^{-1}$. If the series coefficients are Gaussian, $q=2$ and the condition will be $\alpha>1/4.$ Surprisingly, the van der Vaart-van Zanten approach for Gaussian process priors does not require such a condition, see Theorem \ref{thm.ub_for_GPs}. In this case, condition $(iii)$ of Theorem \ref{thm.main_ub} is controlled via Borell's inequality, which allows to choose a set $\Theta_n$ with a better control of the high-frequencies avoiding any additional assumption. It is not clear to us whether  the condition $q>(2\alpha)^{-1}$ for non-Gaussian priors can be avoided. In Remark \ref{rem.on_bias} below, the conditions for $q$ and the popular choice of uniform priors for the coefficients (corresponding to $q=\infty$) are discussed further.

One of the consequences of Theorem \ref{thm.contr_rates_for_wav_priors} is that the posterior contracts faster in the regime $\beta< \alpha$ if heavier-tailed distributions are used. This is illustrated by the following specific example. Consider the wavelet expansion prior with density $f_\xi(x)\asymp e^{-\gamma\abs{x}^q}$ for some $\gamma,q>0$ and all $x\in\R$. Then condition \eqref{eq.decay_cond} reads $\eps_n^{-q(\alpha-\beta)_+/\beta}\lesssim n\eps_n^{(1+\alpha\wedge\beta)/\alpha\wedge\beta}$. For $\beta\ge\alpha>1/q$ we thus obtain the contraction rate $\eps_n\asymp n^{-\alpha/(\alpha+1)}$ which is minimax optimal for $\beta=\alpha$. In the case $\alpha>\beta\vee 1/q$ the contraction rate becomes
\[\eps_n= n^{-\beta/(\beta+q(\alpha-\beta)+1)}.\]
Hence, the contraction rate becomes faster for smaller $q,$ or equivalently, more heavy-tailed distributions for $(\xi_{jk})$. Notice, however, the constraint $q>1/\alpha$ for this result. For smaller $q$, down to $1/(2\alpha)$, we still have posterior consistency and for all $\beta\ge\alpha(1+(\alpha q)^{-1})(1-(2\alpha q)^{-1})$ we obtain  the rate $n^{-(2\alpha-1/q)/(2\alpha+1-1/q)}$, up to an arbitrarily small increase in the exponent. We give two concrete applications:
\begin{exam}
\label{exam.small_ball}
\renewcommand{\labelenumi}{(\alph{enumi})}
\begin{enumerate}
\item If $\xi_{j,k} \sim \mathcal{N}(0,1)$, then Lemma \ref{lem.small_ball_prob_around_h} yields for a sufficiently large constant $C$
$$\inf_{h \in \mC^\beta(R)}\P\big(\|X-h\|_\infty \leq \eps \big) \geq \exp \big(-C\eps^{- (\frac{1+2\alpha -2\beta}{\beta} \vee \frac 1{\alpha})}\big).$$ For $\alpha = 1/2,$ the bound becomes $\exp (-C\eps^{- 2(\frac{1 -\beta}{\beta} \vee 1)})$, which is the same as for the Brownian motion prior. Theorem \ref{thm.contr_rates_for_wav_priors} with $q=2$ yields for $\alpha>1/2$ the posterior contraction rate
\begin{align*}
	\eps_n = n^{-\frac{\beta \wedge \alpha}{2\alpha -\beta\wedge\alpha+1}}.
\end{align*}
For $\beta=\alpha$ this the minimax optimal rate $n^{-\alpha/(\alpha+1)}$.
In case $\alpha>1/4$ we still have posterior consistency, but with a slower rate.

\item If $\xi_{j,k}$ follows a Laplace (double-exponential) distribution, we obtain
$$\inf_{h \in \mC^\beta(R)}\P\big(\|X-h\|_\infty \leq \eps \big) \geq \exp \big(-C\eps^{- (\frac{1+\alpha -\beta}{\beta} \vee \frac 1{\alpha})}\big).$$
The posterior contraction rate becomes $\eps_n = n^{-\frac{\beta \wedge \alpha}{1+ \alpha}}$ if $\alpha>1$ which improves the rate in (a) for the case $\beta < \alpha,$ but relies on a stronger constraint on $\alpha$. For $\beta=\alpha$ we achieve the minimax optimal rate $n^{-\alpha/(\alpha+1)}$.
Posterior consistency is still guaranteed whenever $\alpha>1/2$.
\end{enumerate}
\end{exam}

We can also obtain a fully adaptive result (up to $\log n$ factors) using a random truncation of the wavelet expansion prior. The prior can be realized via a hierarchical construction. In a first step, we draw the maximal resolution level $J$ from a distribution satisfying
\[P(J=j)\propto \exp(-Bj2^{j})\]
for some constant $B>0$. Given $J,$ generate
\begin{align}
	X_t=\sum_{j \leq J, \, k}  \xi_{j,k}\psi_{j,k}(t)
	\label{eq.trunc_prior}
\end{align}
with $\psi_{j,k}$ as in \eqref{eq.prior_wavelet} and  $(\xi_{j,k})_{j,k}$ an  i.i.d. sequence of random variables with positive and continuous Lebesgue density $f_\xi$. In this prior the regularization is induced by the truncation of the wavelet series and compared with \eqref{eq.prior_wavelet} we can set $d_{j,k}=1.$

\begin{lem}
\label{lem.small_ball_prob_around_h_trunc_prior}
Consider the random truncation prior \eqref{eq.trunc_prior}. For $\beta \in (0, s]$, $R>0$  there exists a constant $D>0$ such that
\begin{align*}
	\inf_{h \in \mC^\beta(R)}\P\big(\|X-h\|_\infty\leq \eps \big)
	\geq \eps^{D\eps^{-1/\beta}} \quad \text{for all} \ \ 0< \eps \leq 1.
\end{align*}
\end{lem}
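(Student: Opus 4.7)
The plan is to mimic the approach one would use for the fixed-resolution prior in Lemma \ref{lem.small_ball_prob_around_h}, but with the added trick of conditioning on a suitably chosen value of the random truncation level $J$. The key observation is that for $h\in\mC^\beta(R)$ the wavelet projection error at resolution $J$ is controlled by $2^{-J\beta}$, so choosing $J\asymp \beta^{-1}\log_2(1/\eps)$ makes this smaller than $\eps/2$ uniformly over the Hölder ball.

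Concretely, I would proceed as follows. First, pick an integer $J^*=J^*(\eps)$ with $2^{J^*}\asymp\eps^{-1/\beta}$, large enough that $\|h-P_{J^*}h\|_\infty\le\eps/2$ for every $h\in\mC^\beta(R)$, where $P_{J^*}h=\sum_{j\le J^*,k}c_{j,k}\psi_{j,k}$ with $c_{j,k}=\langle h,\psi_{j,k}\rangle$. This costs a factor $\P(J=J^*)\gtrsim 2^{-J^*}\asymp\eps^{1/\beta}$ in the small-ball probability. Second, condition on $\{J=J^*\}$: on this event $X=\sum_{j\le J^*,k}\xi_{j,k}\psi_{j,k}$ is a finite linear combination whose difference from $P_{J^*}h$ equals $\sum_{j\le J^*,k}(\xi_{j,k}-c_{j,k})\psi_{j,k}$. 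Third, exploit that the $\psi_{j,k}$ are $s$-regular, compactly supported and $L^2$-normalized, so at each fixed $t$ only $O(1)$ of them at a given level $j$ are nonzero, giving
\begin{align*}
\Big\|\sum_{k}a_{k}\psi_{j,k}\Big\|_\infty\lesssim 2^{j/2}\max_k|a_k|.
\end{align*}
Summing a geometric series, it therefore suffices that $\max_{j,k}|\xi_{j,k}-c_{j,k}|\le c\,\eps\,2^{-J^*/2}$ for a small enough absolute constant $c$.

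Fourth, I would obtain the per-coefficient probability. The coefficients $c_{j,k}=\langle h,\psi_{j,k}\rangle$ satisfy $|c_{j,k}|\le\|h\|_\infty\|\psi_{j,k}\|_1\le C(R)$ uniformly in $j,k$ and in $h\in\mC^\beta(R)$, so they all lie in a fixed compact interval $I_R$. Since $f_\xi$ is positive and continuous, $m_R:=\inf_{x\in I_R+[-1,1]}f_\xi(x)>0$, whence for any $\delta\le 1$
\begin{align*}
\P\big(|\xi_{j,k}-c_{j,k}|\le\delta\big)\ge 2m_R\delta.
\end{align*}
Fifth, by independence of the $\xi_{j,k}$ and the fact that the number of coefficients up to level $J^*$ is $\lesssim 2^{J^*}\asymp\eps^{-1/\beta}$,
\begin{align*}
\P\Big(\|X-P_{J^*}h\|_\infty\le \eps/2\,\Big|\,J=J^*\Big)\ge\big(2m_R c\,\eps\,2^{-J^*/2}\big)^{C\eps^{-1/\beta}}\ge \eps^{D\eps^{-1/\beta}}
\end{align*}
for a constant $D=D(R,\beta,f_\xi)$, after absorbing $2^{-J^*/2}\asymp\eps^{1/(2\beta)}$ and the multiplicative constants into the exponent. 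Multiplying by the polynomial factor $\P(J=J^*)\gtrsim\eps^{1/\beta}$ and adjusting $D$ yields the claimed bound, and the whole argument is uniform in $h\in\mC^\beta(R)$.

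The main obstacle I anticipate is of a bookkeeping nature rather than a conceptual one: ensuring that the constants implicit in the choice of $J^*$, in the Hölder-wavelet approximation $\|h-P_{J^*}h\|_\infty\lesssim 2^{-J^*\beta}$, and in the wavelet $\|\cdot\|_\infty$ estimate all combine to deliver precisely the exponent $D\eps^{-1/\beta}$ for some single constant $D$; and making sure that the uniform lower bound $m_R$ on $f_\xi$ is applied over the right compact set containing all possible $c_{j,k}+[-1,1]$. Once these are fixed, the conditional-on-$J$ reduction trivializes the problem compared with Lemma \ref{lem.small_ball_prob_around_h}, because we do not have to balance the decay of $d_{j,k}$ against the tails of $\xi_{j,k}$.
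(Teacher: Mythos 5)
Your proposal is correct and follows essentially the same route as the paper's proof: fix $J^*$ with $2^{J^*}\asymp\eps^{-1/\beta}$ so the wavelet projection error over $\mC^\beta(R)$ is at most $\eps/2$, condition on $\{J=J^*\}$ (costing the polynomial factor $\P(J=J^*)\gtrsim 2^{-J^*}$), use positivity and continuity of $f_\xi$ on a fixed compact interval containing all target coefficients to get a per-coefficient lower bound of order $\eps^{c}$, and raise this to the power $O(2^{J^*})\asymp\eps^{-1/\beta}$ for the number of active coefficients. The only cosmetic difference is the per-coefficient threshold (you take $\asymp\eps 2^{-J^*/2}$, carefully tracking the $2^{j/2}$ growth of $\|\psi_{j,k}\|_\infty$, while the paper uses $\asymp\eps/J^*$); both yield a threshold that is polynomial in $\eps$, so $\log(1/\delta)\asymp\log(1/\eps)$ and the same final exponent $D\eps^{-1/\beta}$ emerges.
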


\begin{thm}
\label{thm.contr_rates_for_trunc_wav_priors}
Consider the random truncation prior \eqref{eq.trunc_prior}. Suppose  $f_\xi(x) \leq \gamma^{-1}e^{-\gamma |x|^q}$ for  some $q,\gamma > 0$ and all $x\in \R$ and fix $\beta\in(0,s]$, $R>0$. Then there exist constants $M$ and $c$ such that for all $n$
\begin{align*}
	&\sup_{f_0 \in \mC^\beta (R)} E_{f_0}\big[\Pi\big( f : \| f-f_0\|_1 \geq M\eps_n | N \big) \big]  \leq e^{-c n\eps_n}
\end{align*}
with
\begin{align*}
	\eps_n = \Big( \frac{\log n}{n} \Big)^{\frac{\beta}{\beta+1}}.
\end{align*}
\end{thm}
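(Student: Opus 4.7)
My plan is to apply Theorem~\ref{thm.main_ub} at the rate $\eps_n=(\log n/n)^{\beta/(\beta+1)}$ and verify its three hypotheses uniformly over $f_0\in\mC^\beta(R)$.

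The one-sided small-ball condition (ii) I would handle through its stronger sup-norm version~\eqref{eq.condii_sup_norm}: set $h:=f_0-A\eps_n/2$, which lies in $\mC^\beta(2R)$ for $n$ large, and $\eps:=A\eps_n/2$, so that Lemma~\ref{lem.small_ball_prob_around_h_trunc_prior} gives $\Pi(\|X-h\|_\infty\le\eps)\ge\eps^{D\eps^{-1/\beta}}$. The defining relation $\eps_n^{(\beta+1)/\beta}=\log n/n$, together with $\log(1/\eps_n)\asymp\log n$, yields $\eps_n^{-1/\beta}\log(1/\eps_n)\asymp n^{1/(\beta+1)}(\log n)^{\beta/(\beta+1)}=n\eps_n$, so the logarithm of the small-ball probability is $\asymp -n\eps_n$, as required. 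The extra logarithmic factor in the definition of $\eps_n$ is precisely what is needed to absorb the $\log(1/\eps_n)$ gap that separated the small-ball rate from the minimax rate in Example~\ref{exam.small_ball}(b).

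For the entropy condition (i), I would take the sieve
\begin{align*}
\Theta_n=\Big\{f=\sum_{l\le J_n,k}c_{l,k}\psi_{l,k}:|c_{l,k}|\le M_n\Big\},
\end{align*}
with $J_n$ such that $2^{J_n}\asymp(n/\log n)^{1/(\beta+1)}$ (so $2^{J_n}\log n\asymp n\eps_n$) and $M_n$ a large multiple of $n\eps_n/\gamma$. Using $\|\psi_{l,k}\|_\infty\lesssim 2^{l/2}$, compact support of each $\psi_{l,k}$ (so that only $O(1)$ wavelets are active at any point and level), and a level-dependent coefficient grid of spacing $\delta_l\asymp\eps_n 2^{-l/2}/J_n$, the induced sup-norm error at scale $\eps_n$ gives $\log N(\eps_n,\Theta_n,\|\cdot\|_\infty)\lesssim\sum_{l\le J_n}2^l\log(M_n J_n 2^{l/2}/\eps_n)\lesssim 2^{J_n}\log n\asymp n\eps_n$, which verifies (i) via Corollary~\ref{cor.suffcond}(iii).

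The main obstacle is condition (iii). The coefficient-exceedance part is routine: by the exponential tail of $f_\xi$ and a union bound, $\Pi(\exists l\le J_n,k:|\xi_{l,k}|>M_n)\lesssim 2^{J_n}\gamma^{-1}e^{-\gamma M_n}\le e^{-(C+A+1)n\eps_n}$ for $M_n$ chosen as a sufficiently large multiple of $n\eps_n/\gamma$. The resolution-tail contribution $\Pi(J>J_n)\asymp 2^{-J_n}\asymp(\log n/n)^{1/(\beta+1)}$ is, however, only polynomially small in $n$, while (iii) demands exponential smallness in $n\eps_n$. Naively enlarging the sieve does not help because of the built-in tension: forcing $\Pi(J>L_n)\lesssim e^{-cn\eps_n}$ requires $L_n\gtrsim n\eps_n$, but the entropy bound then requires $2^{L_n}\log n\lesssim n\eps_n$, i.e.\ $L_n\lesssim\log_2(n\eps_n/\log n)$, which are incompatible. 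I would resolve this by decomposing the posterior over the random truncation level using the Bayes formula of Lemma~\ref{lem.Bayes_formula}: conditional on $J=j$ the prior is finite-dimensional of dimension $\sim 2^j$, and for $j>J_n$ the factor $\exp(-n\int(f_0-f)_+)$ in the Bayes formula exponentially suppresses the integrated likelihood, since any extra high-resolution coefficient creates an inevitable over-shoot of $f_0$ whose $L^1$-size is bounded below by the typical amplitude of $\xi_{l,k}$. This Bayesian Occam's razor contributes an exponential factor in $n\eps_n$ that outweighs the polynomial prior tail $2^{-J_n}$, yielding the effective smallness of $\Pi(\Theta_n^c\mid N)$ in place of (iii). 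With (i), (ii) and this refined control in hand, the proof follows the same scheme as Theorem~\ref{thm.main_ub}, delivering the stated uniform contraction at rate $\eps_n$.
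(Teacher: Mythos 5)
Your treatment of conditions (i) and (ii) matches the paper's: a sieve truncated at $J_n$ with $2^{J_n}\asymp\eps_n^{-1/\beta}$, entropy bounded by $2^{J_n}\log n\asymp n\eps_n$, and the small-ball condition \eqref{eq.condii_sup_norm} via Lemma~\ref{lem.small_ball_prob_around_h_trunc_prior} applied to a downward-shifted target. (The paper's sieve uses the $\ell^1$ coefficient budget $\sum_{j\le J,k}|\theta_{j,k}|\le K2^{J_n}$ instead of your per-coefficient bound $|c_{l,k}|\le M_n$, but both give $\log N(C\eps_n,\Theta_n,\|\cdot\|_\infty)\lesssim 2^{J_n}\log n\asymp n\eps_n$.) Your observation about (iii) is also sharp: $\P(J>J_n)\asymp 2^{-J_n}\asymp(\log n/n)^{1/(\beta+1)}$ is only polynomially small in $n$, whereas (iii) demands $\Pi(\Theta_n^c)\le C''e^{-(C+A+1)n\eps_n}$ with $n\eps_n=n^{1/(\beta+1)}(\log n)^{\beta/(\beta+1)}\gg\log n$. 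The paper's step (iii) invokes Lemma~\ref{lem.exp_ineq}, which only handles the coefficient-exceedance term $\P(\sum_{j\le J_n,k}|\xi_{j,k}|\ge K2^{J_n})$, not the $\P(J>J_n)$ term, so the conclusion as written is not justified there either.

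However, your proposed repair is a heuristic, not a proof, and the mechanism you invoke is misidentified. You attribute the suppression of the $\{J>J_n\}$ slice to the factor $e^{-n\int(f_0-f)_+}$ acting on ``over-shoot'' of $f_0$; but over-shoot ($f>f_0$) enlarges $\int(f-f_0)_+$, not $\int(f_0-f)_+$. The factor $e^{-n\int(f_0-f)_+}$ only penalizes under-shoot; over-shoot is killed instead by the data-dependent factor $dP_{f\vee f_0}/dP_{f_0}(N)$, which has $P_{f_0}$-expectation $1$ and whose control is exactly the test/entropy argument (Proposition~\ref{prop.post_contr2}) you are trying to bypass. Even restricting attention to under-shoot, what one must bound is $e^{(A+C)n\eps_n}\sum_{j>J_n}\P(J=j)\,\E_\xi\big[e^{-n\int(f_0-f)_+}\mid J=j\big]$, which in turn requires an upper bound on $\P(\int(f_0-f)_+<a\mid J=j)$. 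This probability is not negligible: a large $\xi_{0,0}$ can lift $f$ above $f_0$ almost everywhere, and a rough estimate gives suppression at scale $e^{-c2^{j/2}}$, with $2^{J_n/2}\asymp(n/\log n)^{1/(2(\beta+1))}\ll n\eps_n$, so the naive suppression does not beat the $e^{(A+C)n\eps_n}$ amplification from the denominator. Thus the key step of your plan is missing a genuine argument, and filling it in would require either a much sharper analysis of the integrated likelihood over $\{J=j\}$, $j>J_n$, or a weaker form of the theorem's error bound.
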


\appendix

\section{Proofs for Section \ref{sec.general_results}}
\begin{proof}[Proof of Lemma \ref{lem.com}]
The general change of measure formula for two Poisson point processes (PPPs) on $\mathcal X$ with finite intensity measures $\Lambda_1 \ll \Lambda_2$ is given by
\begin{align}
	\frac{dQ_{\Lambda_1}}{dQ_{\Lambda_2}}((X_i,Y_i)_{i\ge 1})
 &= \exp \Big( \sum_{i\ge 1} \log \Big(\frac{d\Lambda_1}{d\Lambda_2}(X_i,Y_i) \Big)
	- \Lambda_1(\mX)+ \Lambda_2(\mX) \Big)
	\label{eq.LR_PPPs}
\end{align}
where $\log 0:=-\infty$, $\exp(-\infty):=0$ and $(X_i,Y_i)$ denote the point locations of the PPP,
cf. \cite{kutoyants1998}, Theorem 1.3. Notice that $P_f$ and $P_g$ have infinite intensity. We therefore apply the following decomposition first. For $h\in L^1([0,1])$ split the state space $[0,1]\times\R$ into $H^-=\{(x,y)\,|\,x\in[0,1],y\in(-\infty,h(x))\}$, $H^+=\{(x,y)\,|\,x\in[0,1],y\in[h(x),\infty)\}$. Then by independence of the PPP on disjoint sets we may write $P_f=P_{f,H^-}\otimes P_{f,H^+}$ where generally $P_{f,H}$ denotes the law of the PPP with intensity $\lambda_{f,H}(x,y)=n{\bf 1}(f(x)\le y)$ on the set $H$. If $\Lambda_{f,H^-}$ denotes the intensity measure of $P_{f,H^-},$ we have $\Lambda_{f,H^-}(H^-)=n\int_0^1 (h-f)_+.$ For $h\ge f\ge g$ we obtain
\[ P_f=P_{f,H^-}\otimes P_{f,H^+}=P_{f,H^-}\otimes P_{h,H^+},\quad P_g=P_{g,H^-}\otimes P_{g,H^+}=P_{g,H^-}\otimes P_{h,H^+},\]
 remarking that $P_{f,H^+}=P_{g,H^+}=P_{h,H^+}$ are PPPs with intensities equal $n$ on $H^+$. Since on $H^-$ the intensity measures are finite, we derive
\begin{align*}
	\frac{dP_f}{dP_g}((X_i,Y_i)_{i\ge 1}) &=\frac{dP_{f,H^-}}{dP_{g,H^-}}(\{(X_i,Y_i)\,|\,i\ge 1\}\cap H^-) \frac{dP_{h,H^+}}{dP_{h,H^+}}(\{(X_i,Y_i)\,|\,i\ge 1\}\cap H^+)\\
	&= \exp\Big(\sum_{i: Y_i  < h(X_i) } \log\Big(\frac{n{\bf 1}(Y_i\ge f(X_i))}{n{\bf 1}(Y_i\ge g(X_i))}\Big)-n\int(h-f)+n\int(h-g)\Big)\\
	&= e^{n \int (f-g)} \mathbf{1}(\forall i : Y_i\ge f(X_i)),
\end{align*}
where we used that the argument of the logarithm is $P_g$-a.s. one or zero and the latter happens if $Y_i<f(X_i)$ for some $i$.
\end{proof}

\begin{proof}[Proof of Lemma \ref{lem.Bayes_formula}]
We first construct a dominating measure.
Let $f_0\in L^1([0,1])$ be fixed. Consider a PPP on $[0,1]\times \R$ with a strictly positive intensity $\lambda^*:[0,1]\times\R\to (0,\infty)$ satisfying
\[\lambda^* (x,y)= n\text{ for }x\in[0,1],\, y\ge f_0(x)\text{ and }\Lambda^*(\{y< f_0(x)\}) =\int_0^1\int_{-\infty}^{f_0(x)}\lambda^*(x,y)dydx<\infty,\]
and denote by $P^*$ its distribution. Let $H^{-}, H^{+}$ be as in the proof for Lemma \ref{lem.com} with $h=f\vee f_0.$ Observe that $\Lambda^*(H^{-})= \Lambda^*(\{y< f_0(x)\})+n\int_0^1 (f-f_0)_+.$

As in the proof for Lemma \ref{lem.com} we decompose $P^*=P^*_{H^-}\otimes P^*_{H^+}$ and $P_f=P_{f,H^-}\otimes P_{f,H^+},$ where $P^*_{H^-}$ and $P^*_{H^+}$ denote the restrictions of $P^*$ to $H^{-}$ and $H^{+}.$ Then $P^*_{H^+}=P_{f,H^+}$ because both intensities equal $n$ on $H^+$. Using \eqref{eq.LR_PPPs},
\begin{align*}
	&\frac{dP_f}{dP^*}((X_i,Y_i)_{i\ge 1}) = \frac{dP_{f,H^-}}{dP^*_{H^-}}(\{(X_i,Y_i)\,|\,i\ge 1\}\cap H^-)\\
	&= \exp\Big( \sum_{i: Y_i < f\vee f_0(X_i)} \log\Big(\frac{n}{\lambda^*(X_i,Y_i)}\Big)+ n\int(f-f_0) + \Lambda^*(\{y\le f_0(x)\})\Big)\mathbf{1}(\forall i : f(X_i) \leq Y_i),
\end{align*}
arguing as for Lemma \ref{lem.com}. Now, note  $P_{f_0}\ll P^*$ and  $Y_i\ge f_0(X_i)$  $P_{f_0}$-a.s. such that
\[ \frac{dP_f}{dP^*}((X_i,Y_i)_{i\ge 1}) =  \exp\Big( n\int(f-f_0) + \Lambda^*(\{y\le f_0(x)\})\Big)\mathbf{1}(\forall i : f(X_i) \leq Y_i)\quad P_{f_0}\text{-a.s.}
\]
Since $\Pi$ is defined on a Polish space and $\Pi(\{f\in\Theta:f\ge f_0\})>0$, the posterior is well-defined (cf. \cite{ghoshal2017}, Section 1.3) and
\begin{align*}
	\Pi(B | (X_i,Y_i)_{i\ge 1})
	&=
	\frac{\int_B \frac{dP_f}{dP^*}((X_i,Y_i)_{i\ge 1}) d\Pi(f)}{\int \frac{dP_f}{dP^*}((X_i,Y_i)_{i\ge 1}) d\Pi(f)} \\
	&=
	\frac{\int_B e^{n\int (f-f_0)}\mathbf{1}(\forall i : f(X_i) \leq Y_i) d\Pi(f) }{\int e^{n\int (f-f_0)}\mathbf{1}(\forall i : f(X_i) \leq Y_i) d\Pi(f) }, \quad P_{f_0}\text{-a.s.}
\end{align*}
Under $P_{f_0}$ we have $\mathbf{1} (\forall i : f(X_i) \leq Y_i) = \mathbf{1} (\forall i : f\vee f_0(X_i) \leq Y_i)$ a.s. and \eqref{eq.RadonNikodym} yields
\begin{align}
	H(f) := e^{n\int (f - f_0)} \mathbf{1}(\forall i : f\vee f_0(X_i) \leq Y_i) = e^{-n\int (f_0-f)_+} \frac{dP_{f\vee f_0}}{dP_{f_0}}( (X_i,Y_i)_{i\ge 1}
),
	\label{eq.Hdef}
\end{align}
which completes the proof.
\end{proof}

\begin{proof}[Proof of Proposition \ref{prop.post_contr1}]
Consider $H(f)$ from \eqref{eq.Hdef}. By Lemma \ref{lem.Bayes_formula} and the assumption on $\Pi$ we have under $P_{f_0}$
\begin{align}
\begin{split}
		\Pi(B | N)= \frac{\int_B  H(f) \, d\Pi(f)}{\int H(f) \, d\Pi(f)}
	&\leq e^{An\eps_n} \frac{\int_B  H(f) \, d\Pi(f)}
	{  \Pi(f: \|f-f_0\|_{1}\leq A\eps_n, f\leq f_0)} \\
&\le e^{(A+C)n\eps_n} \int_B  H(f) \, d\Pi(f),
\end{split}
\label{eq.lb_denominator}
\end{align}
where we used  $f(X_i)\leq Y_i$ $P_{f_0}$-a.s. for all $f\leq f_0.$  With  $E_{f_0}[H(f)] = e^{-n\int (f_0-f)_+}\le e^{-(1+A+C)n\eps_n}$ for all $f$ with $\int (f_0-f)_+\ge (1+A+C)\eps_n$, we obtain the result.
\end{proof}

\begin{proof}[Proof of Proposition \ref{prop.post_contr2}]
For functions $(\ell_j)_{j\in J}$, eligible in the definition of $S_{[}(n,B,f_0)$, consider the test  $\phi_n={\bf 1}(\exists j\forall i:\ell_j(X_i)\le Y_i)$. This test satisfies under the hypothesis $f_0$
\[ P_{f_0}(\phi_n=1)\le \sum_{j\in J} P_{f_0}(\forall i:\ell_j(X_i)\le Y_i)=  \sum_{j\in J}e^{-n\int(\ell_j-f_0)_+}.
\]
By assumption and $\sigma$-continuity of $\Pi$, there exist $R>0$ and $\delta>0$ such that $\Pi(f: \int f\ge -R, f\leq f_0)\geq \delta.$ Thus, we use formula \eqref{eq.lb_denominator} and bound the posterior by
\begin{align*}
	\Pi(B | N)
	\leq \phi_n
	+ \frac{\int_B H(f) (1-\phi_n) d\Pi(f)}{\int H(f) d\Pi(f)}
	\leq \phi_n + \delta^{-1} e^{nR +n \int f_0} \int_B H(f) (1-\phi_n) d\Pi(f).
\end{align*}
Since for $f\in B$ there is an $\ell_j\le f$, we infer
\[H(f) (1-\phi_n)=e^{n\int (f  - f_0)}{\bf 1}(\forall i:f(X_i)\le Y_i){\bf 1}(\forall j\exists i:\ell_j(X_i)>Y_i)=0.\]
 Therefore,
\begin{align*}
	E_{f_0}\big[\Pi(B | N)\big]
	\leq E_{f_0}\big[\phi_n\big]
	\leq   \sum_{j\in J}e^{-n\int(\ell_j-f_0)_+}
\end{align*}
and the claim follows by taking the infimum over all possible $(\ell_j)$.
\end{proof}


\begin{proof}[Proof of Proposition \ref{prop.suffcond}]
 For any $\delta>0,$ the one-sided bracketing entropy $N_{[}(\delta, \Theta_n)$ provides us with  functions $(\ell_j)_{j\in J}$ that can be used to bound $S_{[}( n,\Theta_n,f_0).$ Together with the inequality $-\int (\ell_j -f_0)_+ \leq - \int (f-f_0)_+ + \int (f-\ell_j)_+,$  this implies
\begin{align*}
\textstyle S_{[}\big( n,\{f\in\Theta_n:\int(f-f_0)_+\ge 4C\eps_n \},f_0\big)
& \leq e^{- 2Cn\eps_n} N_{[}\big(2C\eps_n, \Theta_n\big)\\
&\leq e^{- 2Cn\eps_n} N\big(C\eps_n, \Theta_n, \|\cdot \|_\infty\big).
 \end{align*}
It remains to apply Proposition \ref{prop.post_contr2}.
\end{proof}

\begin{proof}[Proof of Theorem \ref{thm.main_ub}]
By Proposition \ref{prop.post_contr1} and Proposition \ref{prop.suffcond} it remains to show  $E_{f_0}[\Pi ( \Theta_n^c | N) ] \leq C'' e^{-n \eps_n }.$ By \eqref{eq.lb_denominator} and $E_{f_0}[H(f)] \leq 1$ we obtain
\begin{align*}
	E_{f_0}\big[\Pi\big( \Theta_n^c | N\big)\big] \leq \frac{e^{An  \eps_n}  \Pi(\Theta_n^c)}{\Pi(f: \|f-f_0\|_1\leq A \eps_n , f\leq f_0)}.
\end{align*}
The claim thus follows from conditions \textit{(ii)} and \textit{(iii)}.
\end{proof}

\begin{proof}[Proof of Lemma \ref{LemBayesMLE}]
The key observation is that we can  restrict the posterior to $\{f\leq \widehat f^{\MLE}\}$ because on the complement the likelihood is zero. To see this, note that $\forall i:f(X_i)\le Y_i$  implies $f\le \widehat f^{\MLE}$ because otherwise $f\vee \widehat f^{\MLE}\in\Theta_n$ would have a larger likelihood than $\widehat f^{\MLE}$. Then $\int (f-f_0)_+ \leq \int (\widehat f^{\MLE}-f_0)_+$ holds such that with $A:=\{N: \int (\widehat f^{\MLE}-f_0)_+ > \eps_n\},$
\begin{align*}
	&E_{f_0}\Big[\Pi\Big( f\in \Theta_n : \int (f-f_0)_+ > \eps_n\Big | N \Big)\Big] \le  E_{f_0}\big[\Pi\big( f\in \Theta_n : A \big | N \big)\big]\\
	&= E_{f_0}\big[\Pi\big( f\in \Theta_n : A \big | N \big) \big(\mathbf{1}(A)+\mathbf{1}(A^c)\big)\big] \\
&= E_{f_0}[\mathbf{1}(A)]= P_{f_0} \Big(\int (\widehat  f^{\MLE}-f_0)_+ > \eps_n\Big),
\end{align*}
where the equalities hold because $A$ is independent of $f$.
%
%
\end{proof}

\section{Proofs for Section \ref{sec.GP_priors}}

We state Theorem 2.1 of \cite{vvvz} in a slightly more general form.

\begin{thm}[Theorem 2.1 of \cite{vvvz}]
\label{thm.vvvz}
Let $X$ be a Borel-measurable, zero-mean Gaussian random element in the Banach space $(\B,\|\cdot\|)$ with RKHS $(\bbH, \|\cdot\|_{\bbH})$ and let $f$ be contained in the closure of $\bbH$ in $\B.$ For any $C_*\geq 1$ and all $\eps_n>0$, $\gamma_n \geq 1$, satisfying
$$\inf_{h:\|h-f\|\leq \eps_n} \|h\|_{\bbH}^2 - \log P(\|X\|\leq \eps_n) \leq \gamma_n,$$
there exists a Borel set $B_n \subset \B$ such that
\begin{align*}
	\log N(3\eps_n, B_n, \|\cdot \|)  &\leq 6C_*\gamma_n, \quad
	\P\big( X \notin B_n \big) \leq e^{-C_*\gamma_n}, \quad \text{and} \ \
	\P\big( \|X - f\| \leq 2\eps_n \big) \geq e^{-\gamma_n}.
\end{align*}
\end{thm}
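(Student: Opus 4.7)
The plan is to apply the three-step van der Vaart--van Zanten machinery from \cite{vvvz}: Cameron--Martin decentering for the small-ball bound around $f$, Borell's isoperimetric inequality for the sieve probability, and a Kuelbs--Li metric entropy estimate. The present version replaces the more usual $\|h\|_\bbH^2/2$ by $\|h\|_\bbH^2$ in the hypothesis, which loosens one constant but otherwise does not alter the argument.

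For the small-ball bound, I would use the hypothesis to pick $h_n\in\bbH$ with $\|h_n-f\|\leq\eps_n$ and $\|h_n\|_\bbH^2-\log\P(\|X\|\leq\eps_n)\leq\gamma_n$; note that this forces $\P(\|X\|\leq\eps_n)\geq e^{-\gamma_n}$. The triangle inequality embeds $\{\|X-h_n\|\leq\eps_n\}$ into $\{\|X-f\|\leq 2\eps_n\}$, and the Cameron--Martin shift formula (Lemma 5.3 of \cite{vvvz}) yields $\P(\|X-h_n\|\leq\eps_n)\geq e^{-\|h_n\|_\bbH^2/2}\P(\|X\|\leq\eps_n)$. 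Chaining gives $\P(\|X-f\|\leq 2\eps_n)\geq e^{-\|h_n\|_\bbH^2/2+\log\P(\|X\|\leq\eps_n)}\geq e^{-\gamma_n}$, with the factor $\tfrac12$ in the exponent providing slack.

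For the sieve, I would take $B_n:=M_n\bbH_1+\eps_n\B_1$ with $\bbH_1,\B_1$ the unit balls of $\bbH,\B$, and $M_n$ a multiple of $\sqrt{C_*\gamma_n}$. Borell's inequality gives $\P(X\in B_n)\geq\Phi\bigl(\Phi^{-1}(\P(\|X\|\leq\eps_n))+M_n\bigr)$; combining with $\Phi^{-1}(\P(\|X\|\leq\eps_n))\geq-\sqrt{2\gamma_n}$ (up to an absolute constant, from $\P(\|X\|\leq\eps_n)\geq e^{-\gamma_n}$) and taking $M_n^2\asymp C_*\gamma_n$ with a sufficiently large constant drives the $\Phi$-argument above $\sqrt{2C_*\gamma_n}$, so $\P(X\notin B_n)\leq e^{-C_*\gamma_n}$. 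For the entropy, any $3\eps_n$-separated subset of $M_n\bbH_1+\eps_n\B_1$ has $\eps_n$-separated RKHS parts, hence $N(3\eps_n,B_n,\|\cdot\|)\leq N(\eps_n,M_n\bbH_1,\|\cdot\|)$; the Kuelbs--Li estimate then bounds the latter by $\exp(KM_n^2)$ for an absolute $K$, and the constant inside $M_n$ is chosen so that $KM_n^2\leq 6C_*\gamma_n$.

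The main obstacle---such as it is---lies in coordinating the constants across the three steps: enlarging $M_n$ improves the Borell conclusion but worsens the entropy target, so one picks the multiplicative constant in $M_n^2\asymp C_*\gamma_n$ large enough to satisfy Borell ($\P(X\notin B_n)\leq e^{-C_*\gamma_n}$) and Kuelbs--Li ($\log N\leq 6C_*\gamma_n$) simultaneously. The entire argument is a routine adaptation of the proof in \cite{vvvz} and uses nothing specific to the boundary-detection model.
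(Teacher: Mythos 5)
Your proposal is correct and follows essentially the same route as the paper: the paper's one-line proof simply reruns the van der Vaart--van Zanten argument with $n\eps_n^2$ replaced throughout by $\gamma_n$, sets $M_n = -2\Phi^{-1}(e^{-C_*\gamma_n})$, and uses $C_*\gamma_n \geq 1$ to guarantee $e^{-C_*\gamma_n} < 1/2$ for the final step. One small correction to your sketch: the entropy control for $M_n\bbH_1$ in \cite{vvvz} is not a raw Kuelbs--Li estimate of the form $\exp(KM_n^2)$ but a Cameron--Martin packing bound $\log N(2\eps_n, M_n\bbH_1, \|\cdot\|) \leq \tfrac12 M_n^2 - \log\P(\|X\|\leq\eps_n) \leq \tfrac12 M_n^2 + \gamma_n$, which under $M_n^2 \leq 8C_*\gamma_n$ (from $-\Phi^{-1}(e^{-x})\le\sqrt{2x}$) still yields $\log N(3\eps_n, B_n, \|\cdot\|) \leq 5C_*\gamma_n \leq 6C_*\gamma_n$ and so does not affect the conclusion.
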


\begin{proof}
Replace $n\eps_n^2$ in the proof of Theorem 2.1 of \cite{vvvz} by $\gamma_n;$ in particular $M_n := -2\Phi^{-1}(e^{-C_*\gamma_n}).$ For the final argument of the proof observe that $e^{-C_*\gamma_n} <1/2$ due to $C_*\gamma_n \geq 1.$
\end{proof}

\begin{rem}
\label{rem.vvvz_thm}
In the previous theorem, the condition that $f$ is contained in the closure of $\bbH$ in $\B$ can be avoided for null sequences $\eps_n\to 0$ if we agree that the infimum over the empty set is $+\infty.$
\end{rem}


\begin{proof}[Proof of Theorem \ref{thm.ub_for_GPs}]
We apply Theorem \ref{thm.vvvz} with $(\B, \|\cdot\|)=(\mC[0,1], \|\cdot\|_\infty),$ $\gamma_n = n\eps_n$, $f=f_0-2\eps_n$ and $C_*=6.$ This shows that there exists $\Theta_n$ such that $\log N(3\eps_n, \Theta_n, \|\cdot\|_\infty)\leq 36 n \eps_n,$ $P(X\notin\Theta_n) \leq e^{-6n\eps_n}$ and $P(\|X+2\eps_n -f_0\|_\infty \leq 2\eps_n) \geq e^{-n \eps_n}.$ Together with Remark \ref{rem.vvvz_thm}, the assumptions of Theorem \ref{thm.main_ub} are satisfied with $A=4, C=C''=1, C'=36$ in view of condition (ii') from \eqref{eq.condii_sup_norm} and condition \textit{(iii)} in Proposition \ref{prop.suffcond}.
\end{proof}

\section{Proofs for Section \ref{sec.Wavelet}}

\begin{proof}[Proof of Lemma \ref{lem.small_ball_prob_around_h}]
Write $h = \sum_{j,k} h_{j,k} \psi_{j,k}.$ Since $\psi$ is a compactly supported wavelet, $\|X-h \|_\infty \leq C \sum_{j} 2^{j/2} \max_k |d_{j,k} \xi_{j,k} -h_{j,k}|$ for a sufficiently large constant $C.$ By assumption $\psi$ is moreover $s$-regular and $h \in \mC^\beta(R)$ with $\beta \leq s.$ Using Theorem 4.4 in \cite{Cohen1993}, we can find constants $0< q<Q < \infty$ such that  $q2^{-\frac j2 (2\alpha+1)}\leq |d_{j,k}| \leq Q 2^{-\frac j2 (2\alpha+1)}$ and $|h_{j,k}|\leq Q 2^{-\frac j2 (2\beta+1)}$ and obtain for any $J,$
\begin{align*}
	\|X-h \|_\infty
	\leq CQ \Big( \sum_{j\leq J} 2^{-j\alpha} \max_k |\xi_{j,k} - h_{j,k}/d_{j,k}| + \sum_{j>J} 2^{-j\alpha} \max_k |\xi_{j,k}|
	+ \sum_{j>J} 2^{-j\beta} \Big).
\end{align*}
By assumption, there exists a $\delta>0,$ such that $L:=\E[|\xi_{j,k}|^{(1+\delta)/\alpha}]<\infty.$ Introduce the events
 \begin{align*}
 G_\le &=\{|\xi_{j,k} - h_{j,k}/d_{j,k}| \leq 2^{(j-J_*)2\alpha} \text{ for }j\leq J_*\text{ and all }k\},\\
  G_>&=\{|\xi_{j,k}|\leq L^{\alpha/(1+\delta)} 2^{(j-J_*)\alpha/(1+\delta/2)}\text{ for }j>J_*\text{ and all }k\},
\end{align*}
where $J_*$ is the smallest integer such that
\begin{align*}
	CQ\Big( 2^{-J_* \alpha} \sum_{r\geq 0} 2^{-\alpha r} + L^{\alpha/(1+\delta)} 2^{-J_* \alpha} \sum_{r\geq 1} 2^{-r\alpha \delta/(2+\delta)} +2^{-J_* \beta} \sum_{r\geq 1} 2^{-r\beta} \Big) \leq \eps,
\end{align*}
which yields $2^{J_*} \asymp \eps^{-1/(\alpha \wedge \beta)}$ as $\eps\rightarrow 0.$  On the event $G_\le$ we have $2^{-j\alpha} \max_k |\xi_{j,k} - h_{j,k}/d_{j,k}| \leq 2^{-j\alpha} 2^{(j-J_*)2\alpha} = 2^{-J_*\alpha}2^{(j-J_*)\alpha}$ and
on the event $G_>,$ $2^{-j\alpha} \max_k |\xi_{j,k}|\leq L^{\alpha/(1+\delta)} 2^{-J^*\alpha} 2^{-(j-J_*)\delta \alpha/(2+\delta)}.$ Then on $G_\le\cap G_>$, thanks to the choice of $J_*,$  we have $ \|X-h\|_\infty\leq \eps.$ Thus,
\begin{align}
	&\P(\|X-h\|_\infty \leq \eps \big)  \label{eq.sb_factorization}\\
	&\geq
	\prod_{j\leq J_* ,\, k} \P\big( |\xi_{j,k} - h_{j,k}/d_{j,k}| \leq 2^{(j-J_*)2\alpha} \big) \prod_{j>J_*,\,  k} \P\big( |\xi_{j,k}|\leq L^{\alpha/(1+\delta)} 2^{(j-J_*)\alpha/(1+\delta/2)}\big). \notag
\end{align}
On the event $\{|\xi_{j,k} - h_{j,k}/d_{j,k}| \leq 2^{(j-J_*)2\alpha}\}$  we  have for $j\leq J_*$ and $R':=1+q^{-1}Q$
\begin{align*}
	|\xi_{j,k} | \leq 2^{(j-J_*)2\alpha} + |h_{j,k}/d_{j,k}| \leq 2^{(j-J_*)2\alpha}+q^{-1} Q 2^{j(\alpha-\beta)}\leq R' 2^{J_*(\alpha-\beta)_+}.
\end{align*}
Since the random variables $\xi_{j,k}$ are symmetric and have a unimodal density, we have $f_\xi(x)\le 1/2$ for $x\ge 1$ as well as $\P\big( |\xi_{j,k} - h_{j,k}/d_{j,k}| \leq 2^{(j-J_*)2\alpha} \big)\geq 2^{(j-J_*)2\alpha} f_\xi(R' 2^{J_*(\alpha-\beta)_+}).$ On the $j$-th resolution level there are at most $A2^j$ wavelet coefficients with  some positive constant $A$. The first product in \eqref{eq.sb_factorization} can therefore be bounded from below by
\begin{align}
	\prod_{j\leq J_*} \Big( f_\xi(R' 2^{J_*(\alpha-\beta)_+}) 2^{(j-J_*)2\alpha}\Big)^{A2^j}
	&\geq f_\xi(R' 2^{J_*(\alpha-\beta)_+})^{A2^{J_*+1}} \prod_{r\le J_*} 2^{-2\alpha r2^{-r} A2^{J_*}} \notag \\
	&\geq  f_\xi(R' 2^{J_*(\alpha-\beta)_+})^{A2^{J_*+1}}  K^{-2^{J_*}}
	\label{eq.sb_factorization_p1}
\end{align}
for a sufficiently large constant $K.$ To find a lower bound of the second product in \eqref{eq.sb_factorization}, observe that by the moment bound on $\xi_{jk}$
\begin{align*}
	\P (|\xi_{j,k}| \leq L^{\alpha/(1+\delta)} 2^{(j-J_*) \alpha/(1+ \delta/2)})
	&= 1 - \P (|\xi_{j,k}|^{(1+\delta)/\alpha} > L 2^{(j-J_*) (1+\delta)/(1+ \delta/2)}) \\
	&\geq
	1 - 2^{(J_*-j) (1+\delta)/(1+ \delta/2)}.	
\end{align*}
For any fixed $j>J_*$ we use $ (1+\delta)/(1+ \delta/2)= 1 + \delta/(2+\delta)$ and the elementary inequality $1-y\ge e^{-2y}$, $0\leq y\leq 1/2,$ and obtain
\begin{align*}
	\prod_k \P (|\xi_{j,k}| \leq L 2^{(j-J_*) (1+\delta)/(1+ \delta/2)} )
	&\geq \big( 1 - 2^{(J_*-j) (1+\delta)/(1+ \delta/2)} \big)^{A2^j}\\
	&\geq \exp\big(-A 2^{J_*+1} 2^{(J_*-j)\delta/(2+\delta)} \big).
\end{align*}
This implies that the product $\prod_{j> J_*, \, k} \P (|\xi_{j,k}| \leq L^{\alpha/(1+\delta)} 2^{(j-J_*) \alpha/(1+ \delta/2)})$ can be bounded from below by
\begin{align}
	\prod_{j>J_*} \exp\big(-A2^{J_*+1} 2^{(J_*-j)\delta/(2+\delta)} \big)
	= \exp\big(-A2^{J_*+1} \sum_{k\geq 1} 2^{-k\delta/(2+\delta)}\big) \geq \exp(-R'' 2^{J_*})
	\label{eq.sb_factorization_p2}
\end{align}
for a sufficiently large constant $R''.$ Recall that $2^{J_*} \asymp \eps^{-1/(\alpha \wedge \beta)}.$  Because of $f_\xi(x)\le 1/2$ for $|x|\ge 1$ and $\eps \leq 1,$ we have for $K':= 1\vee (\log K+R'')/\log 2,$ $f_\xi (K' \eps^{- (\alpha-\beta)_+/\beta})^{K'}\leq 2^{-K'}\leq K^{-1}\exp(-R'')$ and raising both sides to the power $2^{J_*},$  $K^{-2^{J_*}}\exp(-R'' 2^{J_*}) \geq f_\xi (K' \eps^{- (\alpha-\beta)_+/\beta})^{K'2^{J_*}}.$ The result follows therefore from \eqref{eq.sb_factorization}, \eqref{eq.sb_factorization_p1}, and \eqref{eq.sb_factorization_p2}.
\end{proof}

\begin{lem}
\label{lem.exp_ineq_new}
\mbox{}
\begin{enumerate}
\item Let $Z_1, \ldots,Z_m$ be i.i.d. random variables with $\E[e^{Z_1/K}]\le e^{r/K}$ for some $K,r>0$. Then,
\begin{align*}
	\P\Big(\frac1m\sum_{j=1}^m Z_j \geq r+t  \Big) \le   e^{-mt/K},\quad t\ge 0.
\end{align*}
\item Let $Z_1, \ldots,Z_m$ be i.i.d. random variables with $R:=\E[\abs{Z_1}e^{\abs{Z_1}^\gamma/\kappa}]<\infty$ for some $\gamma\in(0,1],\kappa>0$. Then for some $C>0$ and all $m\in\N$
\[ \P\Big(\frac1m\sum_{j=1}^m \abs{Z_j} \geq R+t \Big) \le Ce^{-(tm)^\gamma/\kappa},\quad t\ge 1.\]
\end{enumerate}
\end{lem}

\begin{proof} The first inequality follows directly from the exponential Markov inequality:
\[ \P\Big(\sum_{j=1}^m Z_j \geq (t+r)m  \Big) \leq \E[\exp(Z_1/K)]^m e^{-(t+r)m/K}\le e^{-tm/K}.\]
To show the second assertion, consider the truncated random variables $V_{j,m}:=\abs{Z_j}{\bf 1}(\abs{Z_j}\le tm)$. Observe that $e^x\leq 1+xe^x$ for positive $x$  and let $K_m=\kappa (tm)^{1-\gamma}.$ Together with $V_{j,m}\leq \abs{Z_j}$ and $V_{j,m}=V_{j,m}^\gamma V_{j,m}^{1-\gamma}\leq \abs{Z_j}^\gamma K_m/\kappa,$ we find
\begin{align*}
	\E\big[e^{V_{j,m}/K_m}\big]
	&\leq 1 +K_m^{-1} \E\big[V_{j,m} e^{V_{j,m}/K_m}\big]
	\leq  1 + \frac{R}{K_m}
	\leq \exp(R/K_m).
\end{align*}
From the first part we thus derive
\[ \P\Big(\frac1m\sum_{j=1}^m V_{j,m} \geq R+t  \Big) \le   e^{-mt/K_m}=e^{-(tm)^\gamma /\kappa }.\]
On the other hand, we estimate by the union bound and Markov's inequality
\begin{align*}
\P(\exists j=1,\ldots,m: \abs{Z_j}>tm)\le m\E[\abs{Z_1}e^{\abs{Z_1}^\gamma/\kappa}](tm)^{-1}e^{-(tm)^\gamma/\kappa}= R e^{-(tm)^\gamma/\kappa}/t.
\end{align*}
Taking the two deviations bounds together, we deduce the result for $t\ge 1$.
\end{proof}

\begin{proof}[Proof of Theorem \ref{thm.contr_rates_for_wav_priors}]
It is enough to prove the result for all $n \geq n_0$ with $n_0$ a fixed integer. We verify the conditions $(i)-(iii)$ of Theorem \ref{thm.main_ub}, starting with condition $(ii).$

{\it (ii):} Apply Lemma \ref{lem.small_ball_prob_around_h} to \eqref{eq.condii_sup_norm}. Since $\eps_n \rightarrow 0,$   $f_0 - \eps_n  \in \mC^\beta(R+1)$ for sufficiently large $n$ and we deduce by assumption on $\eps_n$
\begin{align}
	\log\Big(\Pi \big(  f : \| f + \eps_n - f_0 \|_\infty \leq \eps_n\big)\Big)
	\geq D\eps_n^{-1/(\alpha \wedge \beta)}\log\Big(f_\xi\big(D\eps_n^{-(\alpha -\beta)_+/\beta}\big)\Big)
	\gtrsim - n \eps_n.	
		\label{eq.cond_ii_checked}
\end{align}

{\it (i):} We first need to identify a $\Theta_n$ which covers most of the prior mass and has small metric entropy.  Besov spaces provide a natural framework to study wavelet decay. It turns out, however, that the low resolution levels and the bias part should be embedded into different Besov balls. For a level $J$ and some constants $p>\alpha^{-1}$ and $K>0$, which will  be chosen later, define
\begin{align*}
	\Theta_n = \Big \{ g= \sum_{j,k}\theta_{j,k}\psi_{j,k} : &\sum_{j \leq J} 2^{jp(\alpha + 1/2)} \sum_k |\theta_{j,k}|^p \leq K^p 2^{J}, \\
	&\max_{j>J} 2^{jp (\alpha +1/2-1/p)}\sum_{k} |\theta_{j,k}|^p \leq K^p\Big\}.
\end{align*}
Denote by
\begin{align*}
	B_{p,q}^s(M):= \Big\{g= \sum_{j,k}\theta_{j,k}\psi_{j,k} :  \Big(\sum_j 2^{qj (s+ 1/2 - 1/p)} \big(\sum_k |\theta_{j,k}|^p \big)^{q/p}\Big)^{1/q} \leq M\Big\}	
\end{align*}
the Besov $B_{p,q}^s$-ball with radius $M$ and apply the usual modifications for $p =\infty$ and $q=\infty.$ For a reference see for instance \cite{GineNickl}, page 325. To bound the bracketing entropy of $\Theta_n,$ observe that $\Theta_n \subseteq  B_{p,p}^{\alpha +1/p}(K2^{J/p}) +  B_{p, \infty}^\alpha(K)$ where the sum is the elementwise addition. By the metric entropy bounds of Theorem 4.3.36 in \cite{GineNickl}, extended to the  more general quasi-Banach space setting $p,q>0$ following \cite{MR969551} or \cite{mayer2019entropy}, there exists a constant $C'$ such that $\log \mathcal{N}(\delta, B_{p,q}^s(M), \|\cdot\|_\infty )\leq C' (M/\delta)^{1/s}$ if $s>1/p$ for any $p, q>0$.  In view of $\alpha>1/p$  the metric entropy bounds give
\begin{align*}
	&\log \mathcal{N}\big( \eps_n , \Theta_n, \|\cdot\|_\infty \big) \notag \\
	&\leq \log \mathcal{N}\big(\eps_n/2, B_{p,p}^{\alpha +1/p}(K2^{J/p}), \|\cdot\|_\infty \big)
	+ \log \mathcal{N}\big( \eps_n/2,  B_{p,\infty}^{\alpha}(K), \|\cdot\|_\infty \big)
\\
	&\lesssim 2^{J/(\alpha p+1)}\eps_n^{-1/(\alpha+1/p)}+\eps_n^{-1/\alpha}. \notag
\end{align*}
Property {\it (i)} of Theorem \ref{thm.main_ub} is therefore satisfied if
\begin{align}
	 2^{J/(\alpha p+1)}\eps_n^{-1/(\alpha+1/p)} + \eps_n^{-1/\alpha}\lesssim n\eps_n,\quad \alpha>1/p.
	\label{eq.ent1}
\end{align}

{\it (iii):} We bound $\Pi(\Theta_n^c).$ Recall that $X= \sum_{j,k} d_{j,k} \xi_{j,k} \psi_{j,k}$ and $ |d_{j,k}| \leq Q 2^{-j(\alpha+1/2)}$ for all $j,k.$ Thus,
\begin{align*}
	\Pi\big(\Theta_n^c \big)
	&\leq
	\P \Big( \sum_{j \leq J} 2^{jp(\alpha + 1/2)} \sum_k | d_{j,k} \xi_{j,k}|^p > K^p2^{J} \Big)
	+ \P \Big( \max_{j>J} 2^{jp (\alpha +1/2-1/p)}\sum_{k} |d_{j,k} \xi_{j,k}|^p > K^p\Big) \\
	&\leq
	\P \Big( Q^p2^{-J}\sum_{j \leq J}  \sum_k | \xi_{j,k}|^p >  K^p \Big)+
	\sum_{j>J} \P \Big( Q^p 2^{-j} \sum_k |\xi_{j,k} |^p > K^p \Big).
\end{align*}
On the $j$-th resolution level there are of the order of $2^j$ wavelet coefficients. Since by assumption $f_\xi(x) \leq \gamma^{-1}e^{-\gamma |x|^q}$, we have that $E[|\xi_{j,k}| \exp(\gamma |\xi_{j,k}|^q/2)]< \infty$. Thus,  the large deviations bound in Lemma \ref{lem.exp_ineq_new} with $\gamma=q/p$ and $m\asymp 2^J$ shows  that for any constant $c>0$ there is some choice of $K>0$ such that
\[\Pi(\Theta_n^c) \lesssim \exp(-c2^{Jq/p}).\]
This means that {\it (iii)} of Theorem \ref{thm.main_ub} is satisfied if there are constants $J$ and $c$ such that
\begin{align}
	c2^{Jq/p}\ge n\eps_n, \quad \text{for some} \ q \ \text{with} \ \alpha>1/(2q)\text{ for } q\le p.
	\label{eq.ent2}
\end{align}

Both \eqref{eq.ent1} and \eqref{eq.ent2} hold for some $c,C$ if we choose $2^J \asymp (n\eps_n)^{p/q}$ and if \begin{equation}\label{eq.condfinal}
(n\eps_n)^{1/q}\lesssim (n\eps_n)^{\alpha +1/p}\eps_n\text{ and }\eps_n \geq n^{-\alpha/(\alpha+1)},
\end{equation}
provided $\alpha>1/p$, $q\le p$.
In the case $q>1/\alpha$, we choose $p=q$ and \eqref{eq.condfinal} is satisfied for $\eps_n\gtrsim n^{-\alpha/(\alpha+1)}$. For
$q\in((2\alpha)^{-1},\alpha^{-1}]$ we can choose any $p>1/\alpha$ such that $\alpha+1/p-1/q>0$ and  \eqref{eq.condfinal} is satisfied for $\eps_n\gtrsim n^{-(\alpha+1/p-1/q)/(\alpha+1/p-1/q+1)}$. For a given $\delta>0$ we take $p$ sufficiently close to $1/\alpha$ such that $n^{-(\alpha+1/p-1/q)/(\alpha+1/p-1/q+1)}\le n^{-(2\alpha-1/q)/(2\alpha-1/q+1)+\delta}$. This yields the claim.
\end{proof}

\begin{rem}\label{rem.on_bias}
Observe that the condition $p>1/\alpha$ in the proof for $(ii)$ is due to the bias part. This condition leads to a slower rate in the case $q<1/\alpha$ as it forbids to choose $p=q.$ In nonparametric Bayes, wavelet priors with uniform $\xi_{j,k}$ have frequently been considered which allow for a particularly simple analysis of the high-resolution levels. Indeed for uniform priors on the wavelet coefficients, $p>1/\alpha$ always holds and one can even argue directly using the deterministic bound $\| \sum_{j>J,k} d_{j,k}\xi_{j,k}\psi_{j,k}\|_\infty \lesssim \sum_{j>J,k} 2^{-j\alpha}\lesssim 2^{-J\alpha}.$ Therefore, $\| \sum_{j>J,k} d_{j,k}\xi_{j,k}\psi_{j,k}\|_\infty\leq \eps_n/2$ for the standard choice $2^{-J\alpha}\asymp \eps_n$ and a correct adjustment of constants. Thus, to verify the conditions $(ii)$ and $(iii)$ of Theorem \ref{thm.main_ub} in this case, it is enough to cover a subset $ \Theta_n'$ of the space $\{t\mapsto \sum_{j\leq J,k} d_{j,k}a_{j,k}\psi_{j,k}(t): a_{j,k} \in \R\}$ using at most $e^{Cn\eps}$ balls of radius $\eps_n/2$ and show that $\Pi( (\Theta_n')^c)\leq e^{-cn\eps_n}$ for sufficiently large $c.$
\end{rem}

\begin{proof}[Proof of Lemma \ref{lem.small_ball_prob_around_h_trunc_prior}]
Since $\psi$ is $s$-regular and $\beta \leq s,$ we have $|h_{j,k}|\lesssim 2^{-j (\beta +1/2)}.$ As $\psi$ has compact support, there exists a constant $C$ such that
\[\|X-h\|_\infty \leq C \Big(\sum_{j \leq J} 2^{j/2}\max_k |\xi_{j,k} - h_{j,k}| +2^{-J\beta}\Big).\]
Let $J^*$ be the smallest integer $J$ such that $C (\sum_{j \leq J} \eps/(2JC)+2^{-J\beta}) \leq \eps,$ implying $2^{-J^*\beta}\lesssim\eps$. Then
\begin{align*}
	\P(\|X-h\|_\infty \leq \eps )
	\geq
	\P(J=J^*) \prod_{j \leq J^*,\, k}	 \P \big( 2^{j/2}|\xi_{j,k} - h_{j,k}| \leq \eps/(2J^*C) \big).
\end{align*}
By construction, $2^{j/2}\abs{h_{jk}}+\eps/(2J^*C)$ is uniformly bounded over all $j,k$ and $\eps\in(0,1]$ such that for the positive continuous density $f_\xi$ we find a uniform constant $c>0$ with
\[ \forall j\le J^*,k:\;\P \big( 2^{j/2}|\xi_{j,k} - h_{j,k}| \leq \eps/(2J^*C) \big)\ge c 2^{-J^*/2}\eps/J^*.\]
Together with $\P(J=J^*) \propto \exp(-BJ^*2^{J^*})$ and the fact that on the $j$-th resolution level the number of wavelet coefficients is bounded by $A2^j$ for some $A>0,$ this shows that
\begin{align*}
	\P(\|X-h\|_\infty\leq \eps )
	\gtrsim \exp(-BJ^*2^{J^*}) \Big(2^{-J^*/2}\eps/J^*\Big)^{A2^{J^*+1}}
\end{align*}
and thus with $\eps\asymp 2^{-J^*\beta}$ also $\log(\P(\|X-h\|_\infty\leq \eps ))\gtrsim -J^*2^{J^*}\asymp -\eps^{-1/\beta}\log(\eps^{-1})$, which was to be shown.
\end{proof}

\begin{proof}[Proof of Theorem \ref{thm.contr_rates_for_trunc_wav_priors}]
We verify the conditions $(i)-(iii)$ of Theorem \ref{thm.main_ub}.

{\it (i):} To check the first condition, pick the largest $J_n$ such that $J_n2^{J_n}\le c'n\eps_n$ for some small constant $c'>0$ to be chosen later. For a constant $K$, which will be chosen later to be large enough, define
\begin{align*}
	\Theta_n = \Big \{ g= \sum_{j \leq J_n, \, k}\theta_{j,k}\psi_{j,k} :  \sum_{j \leq J_n, \, k} |\theta_{j,k}|^q \leq K^q J_n 2^{J_n}\Big\}.
\end{align*}
Since there are at most $A2^{J_n}$ many non-zero wavelet coefficients in $\Theta_n$ for some constant $A$, we just need a covering in $\R^{A2^{J_n}}$.  By a classical entropy bound, see \cite{MR969551} or Theorem 4.3.35 in \cite{GineNickl} (whose proof also covers the case $q\in(0,1)$), we have
\[\log \mathcal{N}\big(  \eps_n , \Theta_n, \|\cdot\|_\infty \big)\lesssim A2^{J_n}\log\Big(2/\Big(A2^{J_n}(\eps_n/K)^qJ_n^{-1}2^{-3J_n/2}\Big)\Big)\lesssim (J_n+\log(\eps_n^{-1}))2^{J_n}.\]
Since $\log(\eps_n^{-1})\lesssim \log n\lesssim J_n$ we obtain $\mathcal{N}\big(  \eps_n , \Theta_n, \|\cdot\|_\infty \big) \leq C'' e^{C' n\eps_n}$ for some finite constant $C', C''.$

{\it (ii):} Since $\eps_n \rightarrow 0,$   $f_0 - \eps_n  \in \mC^\beta(R+1)$ for sufficiently large $n$. The result follows from applying Lemma \ref{lem.small_ball_prob_around_h_trunc_prior} to \eqref{eq.condii_sup_norm} and
\begin{align*}
	\Pi \big(  f : \| f + \eps_n - f_0 \|_\infty \leq \eps_n\big)
	\geq \eps_n^{D \eps_n^{-1/\beta}}
	\geq e^{-C n \eps_n},
\end{align*}
for sufficiently large $C.$

{\it (iii):} Observe that $\Pi\big(\Theta_n^c \big)\leq  \P(J > J_n) + \P ( \sum_{j \leq J_n, \, k} | \xi_{j,k}|^q \geq K^qJ_n2^{J_n}).$ The sum $\sum_{j \leq J_n, \, k}$ is over $A2^{J_n}$ wavelet coefficients. Recalling $J_n2^{J_n} \asymp n\eps_n$, Lemma \ref{lem.exp_ineq_new}(1) with $Z_1\sim\abs{\xi_{jk}}^q$  shows that we obtain $\Pi(\Theta_n^c) \leq e^{- cn\eps_n}$ for a constant $c>0$ that can be made as large as needed by increasing $K.$

The assertion follows  from Theorem \ref{thm.main_ub}.
\end{proof}

\bibliographystyle{acm}       
\bibliography{bibIM}           

\end{document}